\documentclass[preprint,12pt,3p]{elsarticle}
\usepackage{amsmath,amsthm,amsfonts,amssymb}
\usepackage{fullpage}
\usepackage{subfig}
\usepackage{blkarray}
\usepackage{caption}
\usepackage{float}
\usepackage{algpseudocode}
\usepackage{tikz}
\usepackage{hyperref}
\numberwithin{equation}{section}
\usepackage{xcolor}
\usepackage{colortbl}
\usepackage[normalem]{ulem}
\usepackage{sectsty}

\definecolor{astral}{RGB}{46,116,181}
\subsectionfont{\color{astral}}
\sectionfont{\color{astral}}
\linespread{1.3}
\usepackage{colortbl}

\DeclareMathAlphabet{\mathpzc}{OT1}{pzc}{m}{it}
\DeclareFontFamily{OT1}{pzc}{}
\DeclareFontShape{OT1}{pzc}{m}{it}{<-> s * [0.900] pzcmi7t}{}
\DeclareMathAlphabet{\mathpzc}{OT1}{pzc}{m}{it}

\usepackage{amsbsy}
\usepackage{amsmath}
\usepackage{accents}
\newlength{\dhatheight}

\DeclareMathAlphabet\mathbfcal{OMS}{cmsy}{b}{n}
\definecolor{darkslategray}{rgb}{0.18, 0.31, 0.31}
\definecolor{warmblack}{rgb}{0.0, 0.26, 0.26}

\usepackage{multirow}
\usepackage{mathtools}
\usepackage{algorithm}
\usepackage{algorithmicx}
\makeatletter
\def\BState{\State\hskip-\ALG@thistlm}
\makeatother
\newtheorem{theorem}{Theorem}[section]
\newtheorem{lemma}[theorem]{Lemma}
\newtheorem{corollary}[theorem]{Corollary}
\theoremstyle{definition}
\usepackage{hyperref}
\newtheorem{definition}{Definition}[section]
\newtheorem{remark}{Remark}[section]

\newcommand{\R}{{\mathbb R}}

\begin{document}

\begin{frontmatter}

\title{Convergence of two-stage iterative scheme for $K$-weak regular splittings of type II with application to  Covid-19 pandemic model}

\vspace{-.4cm}

\author{Vaibhav Shekhar$^b$, 
Nachiketa Mishra$^a$, and Debasisha Mishra$^{*b}$}

\address{$^a$Department of Mathematics,\\
                        Indian Institute of Information Technology, Design and Manufacturing, Kanchipuram  
                        \\email$^a$:  nmishra@iiitdm.ac.in 
                        \\
                          $^b$Department of Mathematics,\\
                        National Institute of Technology Raipur, India.
                        \\email$^*$: dmishra\symbol{'100}nitrr.ac.in. \\}
\vspace{-2cm}

\begin{abstract}
    Monotone matrices play a key role in the convergence theory of regular splittings and different types of weak regular splittings. If monotonicity fails, then it is difficult to guarantee the convergence of the above-mentioned classes of matrices.
In such a case, $K$-monotonicity is sufficient for the convergence of $K$-regular and $K$-weak regular splittings, where $K$ is a proper cone in $\mathbb{R}^n$.
 However, the convergence theory of a two-stage iteration scheme in general proper cone setting is a gap in the literature. Especially, the same study for weak regular splittings of type II (even if in standard proper cone setting, i.e.,  $K=\mathbb{R}^n_+$), is open.
To this end,
we propose convergence theory of two-stage iterative scheme for $K$-weak regular splittings of both types in the proper cone setting. We provide some sufficient conditions which guarantee that the induced splitting from a two-stage iterative scheme is a $K$-regular splitting and then establish some comparison theorems. We also study $K$-monotone convergence theory of the stationary two-stage iterative method in case of a $K$-weak regular splitting of type II. The most interesting and important part of this work is on $M$-matrices appearing in the Covid-19 pandemic model. Finally, numerical computations are performed using the proposed technique to compute the next generation matrix involved in the pandemic model.
\end{abstract}

\begin{keyword}
Linear system;  Two-stage iteration; Convergence; $K$-nonnegativity; $K$-monotonicity; $K$-weak regular splittings; Pandemic model; $M$-matrix; Next generation matrix.
\end{keyword}

\end{frontmatter}

\newpage
\section{Introduction}

Let us consider a real large and sparse $n\times n$ non-singular linear systems of the form
\begin{equation}\label{eqn1.1}
    Ax=b
\end{equation}
that appear in the process of discretization of elliptic partial differential equations  (see \cite{smith, wach}). Whereas matrix equations of the form $AX=B$ appear in the problem of computing the spectral radius of the next generation matrix in a Covid-19 Pandemic model which is shown in Section \ref{covid}.
Iterative methods using matrix splittings is one of the simple methods for finding the iterative solution of \eqref{eqn1.1} and the same idea can also be used to solve $AX=B$. So, our focus is on developing convergence theory of a particular iteration scheme to solve \eqref{eqn1.1}.  A matrix splitting $A=U-V$ leads to the iteration scheme \begin{equation}\label{eqn1.2}
    x_{k+1} = Hx_{k}+c.
\end{equation} 
The matrix $H=U^{-1}V$ in the above equation is called as the {\it iteration matrix}. It is well-known that the iteration scheme \eqref{eqn1.2} converges for any initial vector $x^0$  (or is  called \textit{convergent}) if $\rho(H)<1$, where $\rho(H)$ denotes the spectral radius of the matrix $H$, i.e., maximum of moduli of eigenvalues of $H$.  

A splitting $A=U-V$ is called {\it a $K$-regular splitting} \cite{climent:1999, climent:1998} if $U^{-1}$ exists, $U^{-1}\geq_K 0$ and $V\geq_K 0$ ($A\geq_K 0$ means $AK\subseteq K$ where $K$ is a proper cone, see the next section for more details). A splitting $A=U-V$ is called {\it a $K$-weak regular splitting of type I (or type II)} \cite{climent:1999, climent:1998} if $U^{-1}$ exists, $U^{-1}\geq_K 0$ and $U^{-1}V\geq_K 0$ (or $VU^{-1}\geq_K 0$).  The following convergence theorem was established in \cite{climent:1999}.
\begin{theorem}\textnormal{(Theorem 2.2, \cite{climent:1999})}\label{2.5}\\
  Let $A = U-V$ be a $K$-weak regular splitting of type I (or type II). Then, $A^{-1}$ exists and $A^{-1}\geq_K 0$ if and only if $\rho(U^{-1}V)<1$.
\end{theorem}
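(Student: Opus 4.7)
This is an iff, so I would treat the two implications separately. For $(\Leftarrow)$, I set $T=U^{-1}V$ for type~I (so $T\geq_K 0$ and $A=U(I-T)$) or $S=VU^{-1}$ for type~II (so $S\geq_K 0$ and $A=(I-S)U$), using $\rho(U^{-1}V)=\rho(VU^{-1})$. The hypothesis $\rho(T)<1$ makes the Neumann series $(I-T)^{-1}=\sum_{k\ge 0}T^k$ converge, and each summand is $K$-nonnegative, hence so is the sum. Then $A^{-1}=(I-T)^{-1}U^{-1}$ (type~I) or $A^{-1}=U^{-1}(I-S)^{-1}$ (type~II) is a product of maps preserving $K$, giving $A^{-1}\geq_K 0$.

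For $(\Rightarrow)$, assume $A^{-1}\geq_K 0$ and argue by contradiction, supposing $\rho := \rho(U^{-1}V)\geq 1$. The main tool is Perron--Frobenius for proper cones: any matrix mapping $K$ into $K$ has its spectral radius as an eigenvalue with eigenvector in $K$. In the \emph{type~II} case, applying this to $S=VU^{-1}$ yields $x\in K\setminus\{0\}$ with $Sx=\rho x$. The identity $AU^{-1}=I-S$ then gives $AU^{-1}x=(1-\rho)x$, which rearranges to $U^{-1}x=(1-\rho)A^{-1}x$. Both sides lie in $K$ (since $U^{-1}, A^{-1}\geq_K 0$ and $x\in K$); if $\rho>1$ the right-hand side is a negative multiple of an element of $K$, hence lies in $-K$, so $U^{-1}x\in K\cap(-K)=\{0\}$ by properness of $K$, contradicting $x\neq 0$. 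The case $\rho=1$ gives $U^{-1}x=0$ immediately, the same contradiction.

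The \emph{type~I} case is where I expect the real obstacle. Applying Perron--Frobenius directly to $T=U^{-1}V$ produces $A^{-1}Ux=x/(1-\rho)$, but $Ux$ need not lie in $K$ because $U$ is not assumed $K$-nonnegative, so no sign contradiction arises. The remedy is to dualize: $T\geq_K 0$ implies $T^{T}\geq_{K^{*}}0$, so Perron--Frobenius on the dual cone $K^*$ yields $y\in K^*\setminus\{0\}$ with $T^{T}y=\rho y$. Setting $z=U^{-T}y$, the hypothesis $U^{-1}\geq_K 0$ transposes to $U^{-T}\geq_{K^*}0$, so $z\in K^*$. The eigenvalue equation reads $z^{T}V=\rho y^{T}$ and $z^{T}U=y^{T}$, hence $z^{T}A=(1-\rho)y^{T}$, i.e., $y^{T}A^{-1}=z^{T}/(1-\rho)$ for $\rho\neq 1$. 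Pairing with an arbitrary $w\in K$ gives $z^{T}w/(1-\rho)=y^{T}A^{-1}w\geq 0$, using $A^{-1}w\in K$ and $y\in K^*$. For $\rho>1$ this forces $z\in -K^*$; combined with $z\in K^*$ and properness of $K^*$ this gives $z=0$, hence $y=0$, a contradiction, while $\rho=1$ contradicts nonsingularity of $A$. The main obstacle is thus the asymmetry between the two types: the direct cone Perron--Frobenius argument closes for type~II but must be transported to the dual cone for type~I because $U$ itself does not preserve $K$.
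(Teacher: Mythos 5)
The paper does not prove this statement at all --- it is imported verbatim as Theorem 2.2 of Climent--Perea \cite{climent:1999}, so there is no in-paper argument to compare yours against; I can only judge your proof on its own terms, and it is correct. The sufficiency direction is fine (and could be compressed further: with $\alpha=1$, part (iii) of the cone Perron--Frobenius theorem quoted in the paper as Theorem \ref{frob} gives $(I-U^{-1}V)^{-1}\geq_K 0$ directly, avoiding the Neumann-series limit argument, though that argument is also valid because $\pi(K)$ is closed). The necessity direction is where the content lies, and you have correctly isolated the asymmetry: for type II the identity $AU^{-1}=I-VU^{-1}$ pairs the Perron eigenvector of $VU^{-1}$ in $K$ against the two $K$-nonnegative maps $U^{-1}$ and $A^{-1}$, and the sign clash in the pointed cone closes the argument; for type I the analogous primal identity involves $U$, which is not assumed to preserve $K$, so the contradiction genuinely does not materialize and passing to $T^{T}$ on the dual cone $K^{*}$ (with $U^{-T}\geq_{K^{*}}0$ inherited from $U^{-1}\geq_K 0$, and $K^{*}$ again proper so the cone Perron--Frobenius theorem applies there) is the right repair. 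All the small points check out: $\rho(U^{-1}V)=\rho(VU^{-1})$ by similarity, the eigenvectors are nonzero, $z=U^{-T}y\neq 0$ by invertibility, and the $\rho=1$ cases are excluded by nonsingularity of $A$. This is a complete, self-contained proof of a result the paper only cites.
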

\noindent  When $K=\mathbb{R}^{n}_{+}$, then the definitions of $K$-regular and $K$-weak regular type I (or type II) splittings coincide with the definition of regular  \cite{var} and weak regular type I (or type II) \cite{ortega:1967, woz:1994},  respectively. We refer the reader to the book \cite{var} for the convergence results for these classes of matrices.
However, the classical iterative methods are computationally expensive,  which attracts the researcher to develop fast iterative solvers.  In this context,  several comparison results are obtained in the literature (see \cite{climent:1999, climent2:1999, marek:1990} and the references cited therein). 

In 1973, Nichols \cite{nichols} proposed the notion of two-stage iterative method which is recalled next.
Let us consider an iterative scheme of the form
\begin{equation}\label{outer}
Ux_{k+1}=Vx_{k}+b,
\end{equation}
where we take the splitting $A=U-V$ of $A$ for solving a  linear system of the form \eqref{eqn1.1}. The scheme \eqref{outer} is called {\it outer iteration}.
At each step of \eqref{outer}, we must solve the {\it inner equations}
\begin{equation}\label{inner}
Uy=\tilde{b}, \quad \tilde{b}=Vx_{k}+b.
\end{equation}
In the two-stage iterative technique, we solve (\ref{inner}) by another iterative scheme (called {\it inner iteration}) which is formed by using a splitting $U=F-G$, and the same scheme performs $s(k)$  inner iterations. In particular, Frommer and Szyld \cite{szyld} considered the two-stage iterative scheme of the form
\begin{equation}\label{tsiter}
x_{k+1}=(F^{-1}G)^{s(k)}x_{k}+\displaystyle\sum_{j=0}^{s(k)-1}(F^{-1}G)^{j}F^{-1}(Vx_{k}+b), \quad k=1,2,\ldots.
\end{equation}
 For a given initial vector $x_{0}$, the two-stage iterative scheme (\ref{tsiter}) produces the sequence of vectors
\begin{equation}\label{1.6}
x_{k+1}=T_{s(k)}x_{k}+P^{-1}_{s(k)}b , \quad k=0,1,2,\ldots,
\end{equation}
where 
\begin{equation}\label{itmat}
T_{s(k)}=(F^{-1}G)^{s(k)}+\displaystyle\sum_{j=0}^{s(k)-1}(F^{-1}G)^{j}F^{-1}V \quad \text{and} \quad ~P_{s(k)}^{-1}=\displaystyle\sum_{j=0}^{s(k)-1}(F^{-1}G)^{j}F^{-1}.
\end{equation}
We say that the iterative scheme \eqref{tsiter} is {\it stationary} when $s(k)=s$ for all $k,$ while it is {\it non-stationary} when $s(k)$ changes with $k$. The authors of \cite{szyld} established the convergence of \eqref{tsiter} by considering $A=U-V$ as a convergent regular splitting and $U=F-G$ as a convergent weak regular splitting of type I for both stationary and non-stationary scheme \eqref{tsiter}. We refer to \cite{cao} for the stability and error analysis of the two-stage iterative method.
One can also refer a few more convergence and comparison results by Bai and Wang \cite{bai}, but the scope is limited up to type I only. In this article, we aim to establish the convergence of the two-stage iterative scheme \eqref{tsiter} when $A=U-V$ has a $K$-regular splitting and the inner iteration matrix splitting $U=F-G$ has a $K$-weak regular splitting of type II thus expanding the convergence theory of two-stage iterative method.\\

The structure of this paper is as follows. In Section \ref{prlem},  we introduce
some notations and definitions which help to prove the main results. The convergence result is established in Section \ref{MR}. We further analyze the $K$-regularity of the induced splitting from the two-stage iterative scheme and derive a few comparison theorems. In Section \ref{monotone}, we set up the $K$-monotone convergence theorem for the two-stage iterative scheme. Section \ref{covid} deals with a Covid-19 pandemic model and the computation of the next generation matrix involved in this model.

\section{Preliminaries}\label{prlem}
In this section, we collect some basic results required to prove our main results. 
We begin with the notation ${\R}^{n \times n}$ which represents the set of all real matrices of order $n \times n$. 
   We denote the transpose of
   $A\in {\R}^{n \times n}$ by
$A^{T}$. Throughout the paper, all our matrices are real   $n\times n$ matrices unless otherwise stated.
 $\sigma(A)$ denotes the set of all eigenvalues of $A$. By a {\it convergent matrix} $A$, we mean $\displaystyle \lim_{k\to \infty}A^{k}=0$. A matrix $A$ is  convergent if and only if $\rho(A)<1$. We write $K$ and int($K$) to denote a proper cone and the interior of $K$ in $\mathbb{R}^{n}$, respectively. A nonempty subset $K$ of $\mathbb{R}^{n}$ is called a {\it cone} if $0\leq \lambda$ implies $\lambda K\subseteq K$. A cone $K$ is {\it closed} if and only if it coincides with its closure. A cone is a  {\it convex cone} if $K+K\subseteq K$, a {\it pointed cone} if $K\cap (-K)=\{0\}$ and a {\it solid cone} if int$(K)\neq \phi$. A closed, pointed, solid convex cone is called a {\it proper cone}.
 A proper cone induces a partial order in $\mathbb{R}^{n}$ via $x\geq_K y$ if and only if $x-y\geq_K 0$ (see \cite{bpbook} for more details).
 $\pi(K)$ denotes the set of all matrices in $\mathbb{R}^{n\times n}$ which leave a proper cone $K\subseteq \mathbb{R}^{n}$ invariant (i.e., $AK\subseteq K$).
 We now move to the notion of $K$-nonnegativity of a matrix which generalizes the usual nonnegativity (i.e., entry-wise nonnegativity).  $A\geq_K 0$ is equivalent to $A\in \pi(K)$. For $A,B\in{\mathbb{R}^{n\times n}}$,  $A\geq_K B$ if $A-B\geq_K 0$. A matrix $A$ is called {\it K-monotone} if $A^{-1}$ exists and $A^{-1}\geq_K 0$ (see \cite{bpbook}). A vector $x\in \mathbb{R}^{n}$ is called {\it $K$-nonnegative ($K$-positive)} if $x\in K$ ($x\in$ int($K$)), and is denoted as $x\geq_K 0$ ($x>_K 0$). Similarly, for $x,y\in{\mathbb{R}^{n}}$,  $x\geq_K y$ ($x>_K y$) if $x-y\geq_K 0$ ($x-y>_K 0$). Applications of nonnegative matrices to ecology and epidemiology can be seen in the very recent article \cite{lewis} by Lewis {\it et al.}. Next results deal with nonnegativity of a matrix and its spectral radius. 

\begin{theorem}\textnormal{(Corollary 3.2 \& Lemma 3.3, \cite{marek:1990})}\label{l0}\\
Let $A\geq_K 0$. Then\\
$(i)$ $Ax\geq_K \alpha x$, $x\geq_K 0$, implies $\alpha \leq \rho(A)$. Moreover, if $Ax> \alpha x$, then $\alpha <\rho(A)$.\\
$(ii)$ $\beta x\geq_K Ax $, $x> 0$, implies $\rho(A)\leq \beta$. Moreover, if $ \alpha x> Ax$, then $\alpha >\rho(A)$.
\end{theorem}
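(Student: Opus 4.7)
I would proceed by iteration and contradiction. Since $A \in \pi(K)$, applying $A$ to $Ax - \alpha x \in K$ gives $A^{2} x - \alpha Ax \in K$, which combines with $\alpha(Ax - \alpha x) \in K$ (assuming $\alpha \geq 0$; the case $\alpha < 0$ is immediate, as $\rho(A) \geq 0$) to yield $A^{2} x \geq_{K} \alpha^{2} x$. Induction supplies $A^{k} x \geq_{K} \alpha^{k} x$ for every $k \geq 1$. Supposing toward contradiction that $\alpha > \rho(A)$, we have $\rho(A/\alpha) < 1$, so $(A/\alpha)^{k} \to 0$; hence $(A/\alpha)^{k} x - x \in K$ tends to $-x$, and closedness of $K$ forces $-x \in K$. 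Pointedness of $K$ then gives $x = 0$, contradicting the implicit nontriviality $x \neq 0$ (without which the hypothesis is vacuous). Thus $\alpha \leq \rho(A)$. For the strict version, $Ax - \alpha x \in \mathrm{int}(K)$ allows one to pick $\epsilon > 0$ small enough that $Ax - (\alpha + \epsilon) x \in K$; the non-strict part applied to $\alpha + \epsilon$ delivers $\alpha < \rho(A)$.

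\textbf{Plan for part (ii).} Here I would argue via duality. The dual cone $K^{*} = \{u \in \mathbb{R}^{n} : u^{T} y \geq 0 \text{ for all } y \in K\}$ is proper, and $A \in \pi(K)$ if and only if $A^{T} \in \pi(K^{*})$. The Perron--Frobenius theorem for proper cones (Krein--Rutman) then supplies a nonzero $u \in K^{*}$ with $A^{T} u = \rho(A) u$. Pairing $u$ with $\beta x - Ax \in K$ gives $u^{T}(\beta x - Ax) \geq 0$, which rearranges to $(\beta - \rho(A))\, u^{T} x \geq 0$. Since $x \in \mathrm{int}(K)$ and $0 \neq u \in K^{*}$, the standard separation fact forces $u^{T} x > 0$, so $\beta \geq \rho(A)$. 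The strict version is immediate: $\alpha x - Ax \in \mathrm{int}(K)$ yields $u^{T}(\alpha x - Ax) > 0$, and the same computation returns $\alpha > \rho(A)$.

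\textbf{Main obstacle.} The nontrivial ingredient is the existence of the left Perron eigenvector $u \in K^{*}$ at eigenvalue $\rho(A)$, i.e., the cone-theoretic Perron--Frobenius theorem. A self-contained alternative that avoids duality uses the Minkowski functional of the order interval $[-x, x]_{K}$, which is a norm on $\mathbb{R}^{n}$ precisely because $x \in \mathrm{int}(K)$; iterating $\beta x \geq_{K} Ax$ and combining with $A^{k} x \geq_{K} 0$ yields $-\beta^{k} x \leq_{K} A^{k} x \leq_{K} \beta^{k} x$, from which $\|A^{k}\| \leq \beta^{k}$ in this norm, and Gelfand's formula finishes the job. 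Both routes exploit the interior hypothesis $x >_{K} 0$, which is what makes part (ii) non-parallel to part (i) and is the source of the proof's asymmetry.
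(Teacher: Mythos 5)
The paper does not prove this statement at all: it is imported verbatim, with a citation, from Corollary 3.2 and Lemma 3.3 of Marek and Szyld (Numer.\ Math.\ 58 (1990)), so there is no in-paper argument to compare yours against. Judged on its own terms, your proposal is correct. Part (i) is the standard iteration-plus-pointedness argument: $A^k x \geq_K \alpha^k x$ by induction (using $AK\subseteq K$, $K+K\subseteq K$, and $\alpha\geq 0$, with $\alpha<0$ dispatched by $\rho(A)\geq 0$), and $\alpha>\rho(A)$ would force $-x\in K$ by closedness, hence $x=0$ by pointedness; you are right that the hypothesis $x\neq 0$ must be read into the statement, since otherwise it is vacuously false as written, and your $\epsilon$-perturbation for the strict case is sound because $0\notin\mathrm{int}(K)$. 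Part (ii) via the left Perron eigenvector $u\in K^{*}$ of $A^{T}$ (which is exactly the paper's own Theorem 2.2 applied to $A^{T}\geq_{K^{*}}0$) together with $u^{T}x>0$ for $x\in\mathrm{int}(K)$ is clean and arguably shorter than the order-unit-norm route you sketch as an alternative; the latter is closer to what Marek and Szyld actually do, since their setting is infinite-dimensional Banach spaces where the dual-cone Perron--Frobenius statement needs more care, whereas in $\mathbb{R}^{n}$ with a proper cone both routes are available and your duality argument costs nothing. You also correctly identify why the interiority hypothesis $x>_K 0$ is indispensable in (ii) and absent in (i). No gaps.
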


\begin{theorem}\label{frob}\textnormal{(Theorem 1.3.2, \cite{bpbook} $\&$ Lemma 2, \cite{wang:2017})}\\
Let $A \geq_K 0$. Then\\
(i) $\rho(A)$ is an eigenvalue.\\
(ii) K contains an eigenvector of $A$ corresponding to $\rho(A)$.\\
(iii) $\rho(A)<\alpha$ if and only if $\alpha I-A$ is non-singular and $(\alpha I-A)^{-1}\geq_K 0.$
\end{theorem}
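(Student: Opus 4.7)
My plan is to treat the three parts together: parts (i) and (ii) follow from a single Perron--Frobenius-type construction that produces a $K$-nonnegative eigenvector at $\rho(A)$, and part (iii) then falls out of a Neumann-series computation combined with that eigenvector. For (i) and (ii), I would fix a continuous linear functional $\phi$ that is strictly positive on $K\setminus\{0\}$ (such $\phi$ exists because $K$ is solid, pointed, and closed) and form the compact convex base $\Omega=\{x\in K : \phi(x)=1\}$. If $A$ is \emph{strictly $K$-positive} in the sense $A(K\setminus\{0\})\subseteq \mathrm{int}(K)$, then $x\mapsto Ax/\phi(Ax)$ is a continuous self-map of $\Omega$, so Brouwer's fixed point theorem supplies $x\in\Omega$ with $Ax=\lambda x$, $\lambda>0$, and part (i) of Theorem \ref{l0} forces $\lambda=\rho(A)$. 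For a general $A\geq_K 0$, I would perturb $A_\varepsilon = A+\varepsilon B$ with $B$ a fixed strictly $K$-positive matrix, extract eigenpairs $(\rho(A_\varepsilon),x_\varepsilon)$ with $x_\varepsilon\in\Omega$, and then pass to the limit $\varepsilon\downarrow 0$ using compactness of $\Omega$ and continuity of the spectral radius to obtain $x\in K\setminus\{0\}$ with $Ax=\rho(A)x$, establishing (i) and (ii) simultaneously.

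For the forward direction of (iii), if $\rho(A)<\alpha$ then $\rho(A/\alpha)<1$, so the Neumann series
\[
(\alpha I-A)^{-1}=\frac{1}{\alpha}\sum_{k=0}^{\infty}\left(\frac{A}{\alpha}\right)^{k}
\]
converges; each partial sum lies in $\pi(K)$, and since $\pi(K)$ is closed under limits, the sum itself is $K$-nonnegative. For the converse, assume $(\alpha I-A)^{-1}$ exists and is $K$-nonnegative, and use (i)--(ii) to pick $x\in K\setminus\{0\}$ with $Ax=\rho(A)x$. If $\alpha=\rho(A)$ then $(\alpha I-A)x=0$ contradicts invertibility, so $\alpha\neq\rho(A)$. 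Setting $y=(\alpha I-A)^{-1}x\in K$, the identity $x=(\alpha-\rho(A))\,y$ would, in the case $\alpha<\rho(A)$, force $-y\in K$, and pointedness of $K$ would then give $y=0$ and hence $x=0$, a contradiction. Therefore $\rho(A)<\alpha$.

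The main obstacle is clearly producing the $K$-nonnegative Perron eigenvector demanded by (ii). In $K=\mathbb{R}^n_+$ this is the classical Perron--Frobenius theorem, but for an abstract proper cone one has to lean on Krein--Rutman-style compactness arguments together with the perturbation trick above in order to avoid imposing extra irreducibility or strict-positivity hypotheses on $A$. Once this eigenvector is in hand, parts (i) and (iii) reduce to essentially routine spectral manipulations.
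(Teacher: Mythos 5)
This statement is one of the paper's quoted preliminaries: the authors give no proof at all, citing Theorem 1.3.2 of Berman--Plemmons and Lemma 2 of Wang, so there is no in-paper argument to compare against. Your self-contained proof is essentially correct and follows the standard Krein--Rutman/Vandergraft route: a compact convex base $\Omega$ of $K$ cut out by a strictly positive functional $\phi$, Brouwer's fixed point theorem for strictly $K$-positive matrices, and the perturbation $A+\varepsilon B$ together with compactness of $\Omega$ and continuity of the spectral radius to handle a general $A\geq_K 0$. The classical source proves (i)--(ii) by a different device, extracting the eigenvector from the resolvent $(\lambda I-A)^{-1}=\sum_{k\ge 0}\lambda^{-k-1}A^{k}\geq_K 0$ by letting $\lambda\downarrow\rho(A)$ and normalizing; that route avoids Brouwer and the perturbation but relies on the same closedness of $\pi(K)$. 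Your part (iii) --- Neumann series in one direction, and in the other direction pointedness of $K$ applied to $x=(\alpha-\rho(A))y$ with $y=(\alpha I-A)^{-1}x\in K$ --- is exactly the standard argument and is fine.

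Two small repairs, neither fatal. First, in the strictly $K$-positive case Theorem \ref{l0}(i) applied to $Ax\geq_K\lambda x$ only yields $\lambda\leq\rho(A)$; to get $\lambda=\rho(A)$ you must also invoke Theorem \ref{l0}(ii) with $\lambda x\geq_K Ax$, which is legitimate because the fixed point $x=Ax/\phi(Ax)$ lies in $\mathrm{int}(K)$ when $A$ is strictly $K$-positive. Without this second inequality the limiting eigenvalues $\lambda_\varepsilon$ need not converge to $\rho(A)$. Second, you should record why a strictly $K$-positive $B$ exists (take $Bx=\phi(x)u$ with $u\in\mathrm{int}(K)$) and note that $\alpha>\rho(A)\geq 0$ makes the division by $\alpha$ in the Neumann series licit. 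With these additions the architecture of the proof is sound.
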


The next result discusses the convergence of a $K$-monotone sequence (i.e., a monotone sequence with respect to the proper cone $K$).

\begin{lemma}\textnormal{(Lemma 1, \cite{bpcones})}\label{conelem}\\
Let $K$ be a proper cone in ${\R}^n$ and let $\{s_{i}\}_{i=0}^{\infty}$ be a K-monotone non-decreasing sequence. Let $t \in {\R}^{n}$ be such that $t-s_{i} \in K$ for every positive integer $i.$ Then the sequence $\{s_{i}\}_{i=0}^{\infty}$ converges.
\end{lemma}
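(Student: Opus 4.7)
The plan is to reduce the vector-valued convergence problem to a subsequential compactness argument in $\R^n$, using in turn all three defining features of a proper cone $K$: convexity, closedness, and pointedness.

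First, I would observe that $\{s_i\}$ lies in the order interval $[s_0,t]_K = \{x\in\R^n : s_0 \leq_K x \leq_K t\}$. Indeed, $K$-monotonicity together with convexity of $K$ gives $s_i - s_0 = \sum_{j=0}^{i-1}(s_{j+1}-s_j)\in K$, while $t - s_i \in K$ is given. The next step is to show this order interval is bounded in norm. If a sequence $\{x_n\}\subset [s_0,t]_K$ were unbounded, set $y_n := (x_n-s_0)/\|x_n-s_0\| \in K$, pass to a convergent subsequence $y_n\to y^*$ with $\|y^*\|=1$, and observe that $(t-x_n)/\|x_n-s_0\| = (t-s_0)/\|x_n-s_0\| - y_n \to -y^*$. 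Closedness of $K$ then places both $y^*$ and $-y^*$ in $K$, contradicting pointedness.

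Next, by the Bolzano--Weierstrass theorem the bounded sequence $\{s_i\}$ has a convergent subsequence; the crux is to show all subsequential limits coincide. Suppose $s_{i_k}\to u$ and $s_{j_l}\to v$. For each fixed $l$, I can choose $k$ large with $i_k > j_l$, so $s_{i_k}-s_{j_l}\in K$ by the telescoping of $K$-monotone differences; letting $k\to\infty$ and invoking closedness of $K$ gives $u - s_{j_l}\in K$, and then $l\to\infty$ yields $u-v\in K$. Interchanging the roles of the two subsequences gives $v-u\in K$, and pointedness forces $u=v$. A bounded sequence in $\R^n$ with a unique subsequential limit converges, completing the proof.

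The main obstacle is the boundedness step, which genuinely needs both closedness and pointedness of $K$ and does not follow from cone properties alone. An alternative route that sidesteps the norm argument is duality: for each $\phi$ in the dual cone $K^*$, the real sequence $\{\phi(s_i)\}$ is monotone non-decreasing and bounded above by $\phi(t)$, hence convergent; and since $K$ is proper in finite dimensions, $K^*$ has non-empty interior and so contains a basis of $(\R^n)^*$, from which coordinate-wise convergence of $s_i$ follows. Either route is short, but I would lean toward the direct compactness argument for its self-contained character.
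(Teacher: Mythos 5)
Your proof is correct. Note, however, that the paper does not prove this lemma at all: it is quoted verbatim as Lemma~1 of Berman and Plemmons (SIAM J. Numer. Anal. 11 (1974)), so there is no in-paper argument to compare against. Your compactness route is a complete, self-contained proof: the norm-boundedness of the order interval $[s_0,t]_K$ via the normalization/pointedness contradiction is exactly the step that uses all the hypotheses of a proper cone, and the telescoping argument $s_{i_k}-s_{j_l}\in K$ followed by two applications of closedness and one of pointedness correctly forces all subsequential limits to coincide. The duality alternative you sketch is also sound (pointedness of $K$ gives $\operatorname{int}(K^{*})\neq\emptyset$, hence a basis of functionals along which $\phi(s_i)$ is monotone and bounded), and is closer in spirit to how such lemmas are usually handled in the cone literature; either version would serve as a valid replacement for the omitted proof.
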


A comparison of the spectral radii of two different iteration matrices arising out of two matrix splittings
is useful for improving the speed of the iteration scheme \eqref{eqn1.2}. In this
direction, several comparison results have been introduced in the literature (see \cite{climent:1999, climent2:1999}). We recall below a few comparison results for the iterative scheme \eqref{eqn1.2} that are helpful to obtain our main results in Section \ref{MR}. The first two results stated below generalize Theorem 3.4 and Theorem 3.7 of \cite{woz:1994}, while the third one generalizes  Theorem 3.4 of \cite{nachi} for an arbitrary proper cone $K$. These results can be proved similarly as proved in \cite{woz:1994} and \cite{nachi} using our preliminary results, and is therefore omitted.

\begin{theorem}\label{2.10}
Let $A = U_1- V_1 = U_2-V_2$ be two $K$-weak regular splittings of type II of a
$K$-monotone matrix $A\in \mathbb{R}^{n\times n}$. If  $V_2\geq_K V_1$, then
 $\rho(U^{-1}_1V_1)\leq \rho(U^{-1}_2V_2)< 1$.
\end{theorem}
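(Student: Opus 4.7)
The plan is to reduce the comparison $\rho(U_1^{-1}V_1)\le\rho(U_2^{-1}V_2)$ to a comparison of spectral radii of the two $K$-nonnegative matrices $V_1A^{-1}$ and $V_2A^{-1}$. The strict upper bound is free: $K$-monotonicity of $A$ together with the type II hypothesis allows us to apply Theorem~\ref{2.5} to each splitting, giving $\rho(U_i^{-1}V_i)<1$ for $i=1,2$.

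For the main inequality, I would work with $S_i:=V_iU_i^{-1}$, which is $K$-nonnegative by the type II assumption and satisfies $\rho(S_i)=\rho(U_i^{-1}V_i)<1$. Writing $A=(I-S_i)U_i$ and using the Neumann series (which converges since $\rho(S_i)<1$) gives the key identity
\begin{equation*}
V_iA^{-1}=S_i(I-S_i)^{-1}=\sum_{k=1}^{\infty}S_i^{k}\ \ge_K\ 0.
\end{equation*}
Since $V_2\ge_K V_1$ and $A^{-1}\ge_K 0$, right-multiplication yields $V_2A^{-1}\ge_K V_1A^{-1}\ge_K 0$. By Theorem~\ref{frob}, $V_1A^{-1}$ admits a $K$-eigenvector $x\in K$ with $V_1A^{-1}x=\rho(V_1A^{-1})x$, so $V_2A^{-1}x\ge_K \rho(V_1A^{-1})x$, and Theorem~\ref{l0}(i) gives $\rho(V_1A^{-1})\le\rho(V_2A^{-1})$.

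To close the loop I would identify $\rho(V_iA^{-1})$ with $\rho(S_i)/(1-\rho(S_i))$. In one direction, applying $V_iA^{-1}=\sum_{k\ge1}S_i^k$ to a $K$-Perron eigenvector of $S_i$ (given by Theorem~\ref{frob}(ii)) shows via Theorem~\ref{l0}(i) that $\rho(V_iA^{-1})\ge\rho(S_i)/(1-\rho(S_i))$. For the reverse, the inverse identity $S_i=V_iA^{-1}(I+V_iA^{-1})^{-1}$ applied to a $K$-Perron eigenvector of $V_iA^{-1}$ yields, by the same lemma, $\rho(S_i)\ge\rho(V_iA^{-1})/(1+\rho(V_iA^{-1}))$, which rearranges to the opposite bound. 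Since $t\mapsto t/(1-t)$ is strictly increasing on $[0,1)$, inverting the inequality $\rho(V_1A^{-1})\le\rho(V_2A^{-1})$ gives $\rho(S_1)\le\rho(S_2)$, i.e., $\rho(U_1^{-1}V_1)\le\rho(U_2^{-1}V_2)$. The main subtlety is this last identification of $\rho(V_iA^{-1})$: one has to extract it cleanly from the $K$-Perron structure rather than invoke a general spectral-mapping argument, because the cone inequalities (not just the spectra) are what drive the two applications of Theorem~\ref{l0}(i); once that identification is in place, everything else is routine bookkeeping over the cone $K$.
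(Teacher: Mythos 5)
Your proof is correct, and it is essentially the argument the paper itself points to: the paper omits the proof of this theorem, stating that it generalizes Theorem 3.4 of Wo\'{z}nicki and can be proved by the same method using the preliminary results, and your reduction via $V_iA^{-1}=V_iU_i^{-1}(I-V_iU_i^{-1})^{-1}$, the two Perron-eigenvector applications of Theorem~\ref{l0}(i), and the monotonicity of $t\mapsto t/(1-t)$ is precisely that classical route carried over to the cone setting.
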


\begin{theorem}\label{2.9}
Let $A = U_1- V_1 = U_2-V_2$ be two $K$-weak regular splittings of different types of a
$K$-monotone matrix $A\in \mathbb{R}^{n\times n}$. If  $U^{-1}_{1}\geq_K U^{-1}_{2}$, then
 $\rho(U^{-1}_1V_1)\leq \rho(U^{-1}_2V_2)< 1$.
\end{theorem}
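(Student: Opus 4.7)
The plan is to reduce $\rho(U_1^{-1}V_1)\le\rho(U_2^{-1}V_2)$ to Theorem~\ref{l0}(i), applied to a Perron--Frobenius eigenvector (Theorem~\ref{frob}) of whichever of $U_i^{-1}V_i$ or $V_iU_i^{-1}$ happens to lie in $\pi(K)$. By Theorem~\ref{2.5} and $K$-monotonicity, both $\rho(U_i^{-1}V_i)<1$ are automatic, so only the inequality is at stake. Since ``different types'' leaves two configurations, I would handle them in parallel: one directly in $K$, the other in the dual cone $K^*$.

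In the first configuration, suppose $A=U_1-V_1$ is of type II and $A=U_2-V_2$ of type I, so $V_1U_1^{-1},\,U_2^{-1}V_2\in\pi(K)$. Taking a Perron eigenvector $x\in K\setminus\{0\}$ of $V_1U_1^{-1}$ with eigenvalue $\beta:=\rho(U_1^{-1}V_1)$, the substitution $V_1=U_1-A$ rearranges to $U_1^{-1}x=(1-\beta)A^{-1}x$. I would then set $y:=A^{-1}x\in K\setminus\{0\}$ (using $A^{-1}\in\pi(K)$), exploit $U_1^{-1}\geq_K U_2^{-1}$ to get $U_2^{-1}x\leq_K(1-\beta)y$, and compute
\[
U_2^{-1}V_2\,y \;=\; y-U_2^{-1}x \;\geq_K\; \beta y.
\]
Since $U_2^{-1}V_2\in\pi(K)$ and $y\in K\setminus\{0\}$, Theorem~\ref{l0}(i) forces $\beta\le\rho(U_2^{-1}V_2)$.

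For the other configuration, where $A=U_1-V_1$ is type I and $A=U_2-V_2$ is type II, the same primal computation only produces $U_1^{-1}V_1\,y\leq_K\alpha y$, and this would force me into Theorem~\ref{l0}(ii), whose hypothesis $y>_K 0$ I cannot guarantee without an unwarranted irreducibility assumption. My remedy is to re-run the whole argument in $K^*$: $(U_1^{-1}V_1)^T\in\pi(K^*)$ supplies $z\in K^*\setminus\{0\}$ with $(U_1^{-1}V_1)^Tz=\beta z$, and setting $w:=A^{-T}z\in K^*\setminus\{0\}$ with the dual bound $U_1^{-T}\geq_{K^*}U_2^{-T}$ yields $(V_2U_2^{-1})^Tw\geq_{K^*}\beta w$. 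Theorem~\ref{l0}(i) applied in the proper cone $K^*$ to $(V_2U_2^{-1})^T$ then closes the argument, as $\rho((V_2U_2^{-1})^T)=\rho(V_2U_2^{-1})=\rho(U_2^{-1}V_2)$.

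The main obstacle I anticipate is precisely this primal/dual asymmetry between the two type assignments; once one recognises that Theorem~\ref{l0}(ii) cannot be safely invoked and instead routes the ``wrong'' case through $K^*$, everything reduces to bookkeeping on the identities $U_i^{-1}V_i=I-U_i^{-1}A$, $V_iU_i^{-1}=I-AU_i^{-1}$, and the equivalence $A^{-1}\in\pi(K)\Leftrightarrow A^{-T}\in\pi(K^*)$.
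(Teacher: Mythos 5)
Your argument is correct. Note first that the paper does not actually supply a proof of Theorem~\ref{2.9}: it only remarks that Theorems~\ref{2.10}, \ref{2.9} and \ref{2.12} ``can be proved similarly'' to results of Wo\'{z}nicki and Mishra--Mishra, so there is no in-text argument to compare against. Your proof is a complete and valid substitute. In the type~II/type~I configuration the chain $V_1U_1^{-1}x=\beta x \Rightarrow U_1^{-1}x=(1-\beta)A^{-1}x \Rightarrow U_2^{-1}V_2y=y-U_2^{-1}x\geq_K \beta y$ with $y=A^{-1}x\in K\setminus\{0\}$ is exactly right, Theorem~\ref{l0}(i) (read with the implicit normalization $x\neq 0$) gives $\beta\leq\rho(U_2^{-1}V_2)$, and the strict bound $<1$ is indeed immediate from Theorem~\ref{2.5} together with $K$-monotonicity. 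Your diagnosis of the asymmetry in the other configuration is also the correct one: Theorem~\ref{l0}(ii) demands a $K$-interior vector that the construction does not provide, and passing to the dual cone $K^{*}$ --- where $M\geq_K 0$ if and only if $M^{T}\geq_{K^{*}}0$, so the type~I splitting behaves like a type~II object for $K^{*}$ --- is precisely the proper-cone analogue of the transpose trick used throughout the $\mathbb{R}^{n}_{+}$ literature, where the self-duality $K^{*}=K$ hides it. The only point to tidy in a written-up version is that the paper never introduces $K^{*}$, so you should state explicitly the standard facts you invoke ($K^{*}$ is a proper cone when $K$ is, $\pi(K)$ corresponds to $\pi(K^{*})$ under transposition, and $\rho(M^{T})=\rho(M)$), with a citation to Berman and Plemmons.
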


\begin{theorem}\label{2.12}
Let $A = U_1- V_1 = U_2-V_2$ be two $K$-weak regular splittings of type II of a
$K$-monotone matrix $A\in \mathbb{R}^{n\times n}$. If  $U_2\geq_K U_1\geq_K 0$, then
 $\rho(U^{-1}_1V_1)\leq \rho(U^{-1}_2V_2)< 1$.
\end{theorem}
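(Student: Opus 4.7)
My plan is to reduce the statement directly to Theorem \ref{2.10}. That theorem already handles two $K$-weak regular splittings of type II of a $K$-monotone $A$ under the hypothesis $V_2 \geq_K V_1$, so it suffices to show that the assumption $U_2 \geq_K U_1$ implies $V_2 \geq_K V_1$. Nothing further is needed, because the type-II hypothesis and $K$-monotonicity of $A$ are already assumed.

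Concretely, from $A = U_i - V_i$ I have $V_i = U_i - A$ for $i = 1, 2$. Subtracting gives
\[
V_2 - V_1 \;=\; (U_2 - A) - (U_1 - A) \;=\; U_2 - U_1.
\]
The hypothesis $U_2 \geq_K U_1$ is, by definition, $U_2 - U_1 \in \pi(K)$, so $V_2 - V_1 \geq_K 0$, i.e.\ $V_2 \geq_K V_1$. Since $A = U_1 - V_1 = U_2 - V_2$ are by assumption two $K$-weak regular splittings of type II of the $K$-monotone matrix $A$, all the hypotheses of Theorem \ref{2.10} are in place. Applying that theorem yields $\rho(U_1^{-1}V_1) \leq \rho(U_2^{-1}V_2) < 1$, which is exactly the claim.

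There is essentially no obstacle; the only thing to notice is that one should not be tempted to pass through the inverses. Although $U_2 \geq_K U_1 \geq_K 0$ together with $U_1^{-1}, U_2^{-1} \geq_K 0$ does produce $U_1^{-1} \geq_K U_2^{-1}$ (via the identity $U_1^{-1} - U_2^{-1} = U_1^{-1}(U_2 - U_1)U_2^{-1}$, which lies in $\pi(K)$ because $\pi(K)$ is closed under products), that ordering would only let one invoke Theorem \ref{2.9}, which requires splittings of \emph{different} types. The correct route is the additive one above: the difference $U_2 - U_1$ transfers verbatim to $V_2 - V_1$ without needing any inversion. The extra assumption $U_1 \geq_K 0$ plays no role in the spectral comparison itself and seems included only to record that both $U_i$ are $K$-nonnegative.
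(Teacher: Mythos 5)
Your reduction is correct: since $A = U_1 - V_1 = U_2 - V_2$ forces $V_2 - V_1 = U_2 - U_1$, the hypothesis $U_2 \geq_K U_1$ is literally equivalent to $V_2 \geq_K V_1$, and Theorem~\ref{2.10} then yields $\rho(U_1^{-1}V_1) \leq \rho(U_2^{-1}V_2) < 1$ immediately. This is a genuinely different route from the paper's: the paper gives no proof of Theorem~\ref{2.12}, stating only that it can be obtained by adapting the direct arguments of Theorem 3.4 of \cite{nachi} and the corresponding results in \cite{woz:1994} to the proper-cone setting; those arguments work with the splitting matrices directly (this is where a hypothesis like $U_1 \geq_K 0$ traditionally earns its keep), whereas you bypass all of that with the one-line identity $V_2 - V_1 = U_2 - U_1$. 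Your observation that $U_1 \geq_K 0$ becomes redundant is accurate and in fact exposes that, as the results are stated in this paper (Theorem~\ref{2.10} assumes only $V_2 \geq_K V_1$, with no sign condition on $V_1$), Theorem~\ref{2.12} is nothing more than Theorem~\ref{2.10} with an unused extra hypothesis. The one caveat is that your proof is exactly as strong as Theorem~\ref{2.10}, which the paper also leaves unproved; a fully self-contained verification would still require carrying out the cone-version of the classical comparison argument. Your side remarks --- that $U_1^{-1} \geq_K U_2^{-1}$ follows from $U_1^{-1}(U_2-U_1)U_2^{-1} \geq_K 0$, and that this would only feed Theorem~\ref{2.9}, which demands splittings of different types --- are both correct.
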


\section{Main Results}\label{MR}

We divide this section into two parts. The first subsection discusses the convergence results for stationary and non-stationary two-stage method. We then classify the type of splitting induced by the two-stage iterative scheme. The second subsection discusses some interesting comparison results.

\subsection{Convergence Results}\label{CR}
In the case of standard proper cone $K=\mathbb{R}^{n}_{+}$, Frommer and Szyld \cite{szyld}  obtained the convergence criteria for stationary two-stage iteration scheme  \eqref{tsiter} in Theorem 4.3 \cite{szyld} when $U = F-G$ is a convergent weak regular splitting of type I.
In Theorem 4.4 \cite{szyld}, they stated the convergence result for non-stationary two-stage iteration scheme \eqref{tsiter}.
We state below the convergence result for stationary and non-stationary two-stage iteration schemes in an arbitrary proper cone setting.  We skip the proof as it follows similar steps as in \cite{szyld}.

\begin{theorem}\label{3.1}
Let $A=U-V$ be a convergent K-regular splitting and $U = F-G$ be a convergent K-weak regular splitting of type I. Then, the stationary and non-stationary  two-stage iteration scheme
is convergent for any sequence $s(k)\geq 1,~ k = 1, 2,\dots$ of inner iterations.
\end{theorem}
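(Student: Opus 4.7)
The plan is to handle the stationary and non-stationary cases separately, both built around the \emph{induced splitting} $A=P_s-Q_s$ that sits behind the two-stage iteration matrix $T_s$.

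For the stationary case $s(k)=s$, write $H=F^{-1}G$ and $P_s^{-1}=\sum_{j=0}^{s-1}H^{j}F^{-1}$. A short telescoping calculation using $F^{-1}U=I-H$ gives $P_s^{-1}A = I-H^{s}-P_s^{-1}V$, hence $T_s=I-P_s^{-1}A=H^{s}+P_s^{-1}V$. From the $K$-weak regularity of $U=F-G$ one has $F^{-1}\geq_K 0$ and $H\geq_K 0$, so $P_s^{-1}\geq_K 0$; combined with $V\geq_K 0$ from the $K$-regularity of $A=U-V$, this gives $T_s\geq_K 0$. Thus $A=P_s-Q_s$ is a $K$-weak regular splitting of type I. Because $A=U-V$ is a convergent $K$-regular splitting, Theorem~\ref{2.5} yields $A^{-1}\geq_K 0$; applying Theorem~\ref{2.5} again to the induced splitting then delivers $\rho(T_s)<1$.

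For the non-stationary case I would produce a common contraction vector: a $w\in\mathrm{int}(K)$ and $\alpha\in(0,1)$ such that $T_s w\leq_K \alpha w$ for every $s\geq 1$. Pick any $v\in\mathrm{int}(K)$ and set $w=A^{-1}v$. Because $A^{-1}$ is a $K$-nonnegative linear isomorphism, it sends the open set $\mathrm{int}(K)$ into $K$ and hence into $\mathrm{int}(K)$; the same reasoning places $F^{-1}v$ in $\mathrm{int}(K)$. The identity $T_s w=w-P_s^{-1}v$, together with the $K$-monotone lower bound $P_s^{-1}\geq_K F^{-1}$ (valid since $P_s^{-1}-F^{-1}=\sum_{j=1}^{s-1}H^{j}F^{-1}\geq_K 0$), gives $T_s w\leq_K w-F^{-1}v$. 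Since $w$ and $F^{-1}v$ both lie in $\mathrm{int}(K)$, there is a $\beta\in(0,1)$ with $F^{-1}v\geq_K \beta w$, so $\alpha=1-\beta$ works. Iterating with $T_{s(k)}\geq_K 0$ produces $T_{s(k)}\cdots T_{s(0)}w\leq_K \alpha^{k+1}w$, and a standard squeeze against $\pm e_0\leq_K Cw$ (possible because $w\in\mathrm{int}(K)$) forces $e_k\to 0$ for any initial error.

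The main obstacle is precisely the non-stationary half, since the stationary case is essentially a bookkeeping application of Theorem~\ref{2.5} to the induced splitting. What makes the non-stationary argument delicate is the need for a \emph{uniform} contraction whose vector $w$ and rate $\alpha$ do not depend on the particular sequence $s(k)$; the two structural facts that let this go through are the $K$-monotonicity $F^{-1}=P_1^{-1}\leq_K P_2^{-1}\leq_K\cdots\leq_K U^{-1}$, which pins down a uniform lower bound on $P_s^{-1}$, and the openness fact that invertible $K$-nonnegative operators carry $\mathrm{int}(K)$ into itself, which upgrades the cone inequality into a strict one. Lemma~\ref{conelem} is available as an alternative closing device if one prefers to phrase the conclusion as convergence of a $K$-monotone bounded sequence of iterates rather than as geometric decay of errors.
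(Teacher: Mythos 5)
The paper gives no proof of Theorem~\ref{3.1} at all --- it explicitly defers to Theorems 4.3 and 4.4 of Frommer and Szyld --- and your argument is a correct reconstruction of exactly that standard route transplanted to a general proper cone: the stationary half via the induced splitting $A=P_s-Q_s$ being a $K$-weak regular splitting of type I of the $K$-monotone matrix $A$ followed by Theorem~\ref{2.5}, and the non-stationary half via a uniform contraction vector $w=A^{-1}v\in\mathrm{int}(K)$ obtained from the lower bound $P_s^{-1}\geq_K F^{-1}$. Both halves check out (including the openness fact that an invertible $K$-nonnegative map carries $\mathrm{int}(K)$ into itself, the invertibility of $I-(F^{-1}G)^{s}$ coming from the assumed convergence of $U=F-G$, and the normality of proper cones in finite dimensions that justifies the final squeeze), so there is nothing to correct.
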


However, the convergence of \eqref{tsiter} is not yet studied if $U = F-G$ is not a weak regular splitting of type I even in the standard proper cone $\mathbb{R}^{n}_{+}$ setting. This issue is settled in this subsection for another class of splittings known as $K$-weak regular splitting of type II. To do this, 
we have 
\begin{align}\label{t}
T_{s(k)}=(F^{-1}G)^{s(k)}+\displaystyle\sum_{j=0}^{s(k)-1}(F^{-1}G)^{j}F^{-1}V=[I-(I-(F^{-1}G)^{s(k)})(I-U^{-1}V)]
\end{align}
from the two-stage iteration scheme \eqref{tsiter}. 
If the splitting $U=F-G$ for the system \eqref{inner} is a $K$-weak regular splitting of type II, then the matrix \begin{equation}\label{t'}
\widehat{T}_{s(k)}=(GF^{-1})^{s(k)}+\displaystyle\sum_{j=0}^{s(k)-1}(GF^{-1})^{j}VF^{-1}
\end{equation}
is $K$-nonnegative. 
Recall that two matrices $B$ and $C$ are {\it similar} if there exists a non-singular matrix $X$ such that $B=XCX^{-1}$. It is well-known that similar matrices have the same eigenvalues. Hence $\rho(B)=\rho(C)$. Based on this fact, we present below our first main result which says $T_{s(k)}$ and $\widehat{T}_{s(k)}$ have the same spectral radius under some assumption. 

\begin{lemma}\label{lem1}
Let $\widehat{T}_{s(k)}$ be as defined as \eqref{t'} and $T_{s(k)}$ be as defined as \eqref{t} for $s(k)=1,2,\dots$. If $VF^{-1}G=GF^{-1}V$, then $\rho(\widehat{T}_{s(k)})=\rho(T_{s(k)})$.
\end{lemma}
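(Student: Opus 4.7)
My plan is to establish the stronger statement that $T_{s(k)}$ and $\widehat{T}_{s(k)}$ are similar via conjugation by $F$, from which equality of spectral radii follows immediately by the remark on similar matrices made just before the lemma.

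The main ingredient is the telescoping identity
\begin{equation*}
F(F^{-1}G)^{j}F^{-1}=(GF^{-1})^{j}, \qquad j\geq 0.
\end{equation*}
Writing $(F^{-1}G)^{j}$ as $j$ alternating factors $F^{-1}G\cdot F^{-1}G\cdots F^{-1}G$ and conjugating, the leading $F$ cancels the first $F^{-1}$ and the trailing $F^{-1}$ is already present, so the product collapses to $GF^{-1}\cdot GF^{-1}\cdots GF^{-1}=(GF^{-1})^{j}$ (with the case $j=0$ reducing to $FIF^{-1}=I$).

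I would then apply this identity term by term to the definition of $T_{s(k)}$ in \eqref{t}:
\begin{equation*}
F\,T_{s(k)}\,F^{-1}= F(F^{-1}G)^{s(k)}F^{-1}+\sum_{j=0}^{s(k)-1}F(F^{-1}G)^{j}F^{-1}V F^{-1}=(GF^{-1})^{s(k)}+\sum_{j=0}^{s(k)-1}(GF^{-1})^{j}VF^{-1},
\end{equation*}
which is exactly $\widehat{T}_{s(k)}$ as defined in \eqref{t'}. Hence the two matrices have identical spectra and, in particular, $\rho(T_{s(k)})=\rho(\widehat{T}_{s(k)})$.

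There is no real obstacle; the similarity is mechanical. I will note that the hypothesis $VF^{-1}G=GF^{-1}V$ is in fact not required for this route. It does, however, yield the intertwining relation $VF^{-1}(F^{-1}G)^{j}=(GF^{-1})^{j}VF^{-1}$ by an easy induction on $j$, which would be the natural tool if one preferred a direct term-by-term comparison of $T_{s(k)}$ and $\widehat{T}_{s(k)}$ in place of the similarity transform.
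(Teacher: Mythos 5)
Your proof is correct, and it takes a genuinely different (and shorter) route than the paper. The paper conjugates by $A$: it first derives $V(F^{-1}G)^{s(k)}=(GF^{-1})^{s(k)}V$ and $U^{-1}(GF^{-1})^{j}=(F^{-1}G)^{j}U^{-1}$ from the hypothesis $VF^{-1}G=GF^{-1}V$, and then pushes $A(\cdot)A^{-1}$ through the closed form $T_{s(k)}=I-(I-(F^{-1}G)^{s(k)})(I-U^{-1}V)$ in a ten-line computation to arrive at $AT_{s(k)}A^{-1}=\widehat{T}_{s(k)}$. You conjugate by $F$ instead, and the identity $F(F^{-1}G)^{j}F^{-1}=(GF^{-1})^{j}$ makes $FT_{s(k)}F^{-1}=\widehat{T}_{s(k)}$ immediate, term by term, with no commutativity assumption at all; I checked this on a concrete example where $VF^{-1}G\neq GF^{-1}V$ and the $F$-similarity indeed still holds. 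So your observation that the hypothesis is superfluous for this lemma is right, and your argument is a genuine simplification. Two caveats: first, the paper's stronger conclusion $\widehat{T}_{s(k)}=AT_{s(k)}A^{-1}$ is reused verbatim in Theorem~\ref{impthm} (to get $\widehat{T}^{i}_{s(k)}=AT^{i}_{s(k)}A^{-1}$ and to identify the induced splittings), so the commutativity hypothesis cannot simply be deleted from the paper's overall development even though your route shows it is not needed for equality of spectral radii. Second, your closing side remark has a slip: from $VF^{-1}G=GF^{-1}V$ the induction gives $V(F^{-1}G)^{j}=(GF^{-1})^{j}V$, equivalently $VF^{-1}(GF^{-1})^{j}=(GF^{-1})^{j}VF^{-1}$, not $VF^{-1}(F^{-1}G)^{j}=(GF^{-1})^{j}VF^{-1}$ as written; this does not affect your main argument.
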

\begin{proof}
Since $VF^{-1}G=GF^{-1}V$, we have $V(F^{-1}G)^{s(k)}=(GF^{-1})^{s(k)}V$ for any nonnegative integer $s(k)$. Also, we observe that $U^{-1}GF^{-1}=F^{-1}GU^{-1}$. Therefore, $U^{-1}(GF^{-1})^{j}=(F^{-1}G)^{j}U^{-1}$ for any nonnegative integer $j$. Now,
\begin{align*}
AT_{s(k)}A^{-1}&=A[I-(I-(F^{-1}G)^{s(k)})(I-U^{-1}V)]A^{-1}\\
&=I-A(I-(F^{-1}G)^{s(k)})U^{-1}\\
&=I-A(U^{-1}-(F^{-1}G)^{s(k)}U^{-1})\\
&=I-AU^{-1}(I-(GF^{-1})^{s(k)})\\
&=I-(I-VU^{-1})(I-(GF^{-1})^{s(k)})\\
&=I-(I-(GF^{-1})^{s(k)})+VU^{-1}(I-(GF^{-1})^{s(k)})\\
\end{align*}
\begin{align*}
&=(GF^{-1})^{s(k)}+VU^{-1}\displaystyle \sum_{j=0}^{s(k)-1}(GF^{-1})^{j}(I-GF^{-1})\\
&=(GF^{-1})^{s(k)}+V\displaystyle \sum_{j=0}^{s(k)-1}(F^{-1}G)^{j}U^{-1}(I-GF^{-1})\\
&=(GF^{-1})^{s(k)}+\displaystyle \sum_{j=0}^{s(k)-1}(GF^{-1})^{j}VU^{-1}(I-GF^{-1})\\
&=(GF^{-1})^{s(k)}+\displaystyle \sum_{j=0}^{s(k)-1}(GF^{-1})^{j}VF^{-1}=\widehat{T}_{s(k)}.
\end{align*}
Thus, the matrices $T_{s(k)}$ and $\widehat{T}_{s(k)}$ are similar. Hence, $\rho(\widehat{T}_{s(k)})=\rho(T_{s(k)})$.
\end{proof}

Next, we establish the convergence of \eqref{tsiter} when the splitting $U=F-G$ is a $K$-weak regular splitting of type II that partially fulfills the objective of the paper.

\begin{theorem}\label{cgs}
Let $A=U-V$ be a convergent K-regular splitting and $U=F-G$ be a convergent K-weak regular splitting of type II such that $VF^{-1}G=GF^{-1}V$. Then, the stationary two-stage iterative method is convergent for any initial vector $x^{0}$. 
\end{theorem}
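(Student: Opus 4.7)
The plan is to recast the stationary two-stage iteration \eqref{tsiter} with $s(k) \equiv s$ as a one-step iteration $x_{k+1} = T_s x_k + P_s^{-1} b$ coming from a single induced splitting $A = P_s - Q_s$, and then to verify that this induced splitting is itself a $K$-weak regular splitting of type~II. Once that is established, Theorem~\ref{2.5} applied to $A = P_s - Q_s$, together with $A^{-1} \geq_K 0$ (which follows from Theorem~\ref{2.5} applied to the convergent $K$-regular splitting $A = U - V$), yields $\rho(T_s) = \rho(P_s^{-1} Q_s) < 1$, which is the required convergence.

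With $P_s^{-1} = \sum_{j=0}^{s-1}(F^{-1}G)^j F^{-1}$ as in \eqref{itmat} and $Q_s := P_s - A$, the identity $F^{-1} U = I - F^{-1} G$ together with the telescoping $\sum_{j=0}^{s-1}(F^{-1}G)^j(I - F^{-1}G) = I - (F^{-1}G)^s$ gives $T_s = I - P_s^{-1} A = P_s^{-1} Q_s$. To see that $P_s$ is non-singular and $P_s^{-1} \geq_K 0$, I would rewrite $P_s^{-1} = F^{-1} \sum_{j=0}^{s-1}(GF^{-1})^j$ using the commutation $F^{-1}(GF^{-1})^j = (F^{-1}G)^j F^{-1}$; the partial-sum factor is invertible because $\rho(GF^{-1}) = \rho(F^{-1}G) < 1$ by the convergence of $U = F - G$, and the product is $K$-nonnegative because $F^{-1} \geq_K 0$ and $GF^{-1} \geq_K 0$ are part of the type~II hypothesis on $U = F - G$. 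For the remaining type~II condition, a direct computation using $A F^{-1} = (I - GF^{-1}) - V F^{-1}$ and the same telescoping argument produces
\begin{equation*}
Q_s P_s^{-1} \;=\; I - A P_s^{-1} \;=\; (GF^{-1})^s + V F^{-1} \sum_{j=0}^{s-1}(GF^{-1})^j,
\end{equation*}
which is $K$-nonnegative since $V \geq_K 0$ (from $A = U - V$ being $K$-regular) and every other factor is already $K$-nonnegative.

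The convergence $\rho(T_s) < 1$ then follows by feeding the type~II splitting $A = P_s - Q_s$ and the monotonicity $A^{-1} \geq_K 0$ into Theorem~\ref{2.5}. The hypothesis $V F^{-1} G = G F^{-1} V$ enters the argument most naturally via Lemma~\ref{lem1}, which provides the alternative route $\rho(T_s) = \rho(\widehat{T}_s)$ and lets one recognise $\widehat{T}_s$ as $Q_s P_s^{-1}$; it is convenient rather than essential for the skeleton above. The main place I would be careful is the bookkeeping between the two equivalent factorisations of $P_s^{-1}$ (with $F^{-1}$ on the right versus on the left), because the type~II assumptions on $U = F - G$ are one-sided and a careless rearrangement could silently invoke the commutation we do not wish to rely on.
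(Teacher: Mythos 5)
Your argument is correct, and it takes a genuinely different route from the paper's. The paper works with the matrix $\widehat{T}_{s}$ of \eqref{t'}: it first uses the commutation hypothesis $VF^{-1}G=GF^{-1}V$ twice --- once in Lemma \ref{lem1} to get the similarity $\widehat{T}_{s}=AT_{s}A^{-1}$, and once more to rewrite $\widehat{T}_{s}=I-\sum_{j=0}^{s-1}(GF^{-1})^{j}(I-GF^{-1})(I-VU^{-1})$ --- and then produces a $K$-positive vector $x$ with $\widehat{T}_{s}x <_K x$, so that Theorem \ref{l0}(ii) gives $\rho(\widehat{T}_{s})<1$ and similarity transfers this to $T_{s}$. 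You instead exhibit the induced splitting $A=P_{s}-Q_{s}$ directly, check $P_{s}^{-1}=F^{-1}\sum_{j=0}^{s-1}(GF^{-1})^{j}\geq_K 0$ and $Q_{s}P_{s}^{-1}=I-AP_{s}^{-1}=(GF^{-1})^{s}+VF^{-1}\sum_{j=0}^{s-1}(GF^{-1})^{j}\geq_K 0$, and feed the resulting type~II splitting together with $A^{-1}\geq_K 0$ into the characterization of Theorem \ref{2.5}. This is essentially the machinery the paper itself deploys only later (Theorem \ref{impthm} and Remark \ref{rem1}), but your version of it computes $Q_{s}P_{s}^{-1}$ without passing through the similarity $\widehat{T}_{s}=AT_{s}A^{-1}$, and as a consequence --- as you correctly observe --- the hypothesis $VF^{-1}G=GF^{-1}V$ is never used: every step (the telescoping identity $T_{s}=I-P_{s}^{-1}A$, the rearrangement $(F^{-1}G)^{j}F^{-1}=F^{-1}(GF^{-1})^{j}$, and the two nonnegativity checks) holds unconditionally. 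So your proof actually establishes the theorem without the commutation assumption, which is a strictly stronger statement; the price is that you lose the identification $Q_{s}P_{s}^{-1}=\widehat{T}_{s}$ (the two differ by the placement of $VF^{-1}$ relative to the sum unless the commutation holds), but that identification is not needed for convergence. Your closing caution about the one-sidedness of the type~II hypotheses is well placed and your bookkeeping respects it throughout.
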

\begin{proof}
We have $\widehat{T}_{s(k)}\geq_K 0$ such that
\begin{align*}
 \widehat{T}_{s(k)}&=(GF^{-1})^{s(k)}+\displaystyle \sum_{j=0}^{s(k)-1}(GF^{-1})^{j}VF^{-1}\\
  &=(GF^{-1})^{s(k)}+\displaystyle \sum_{j=0}^{s(k)-1}(GF^{-1})^{j}VU^{-1}(I-GF^{-1})\\
  &=(GF^{-1})^{s(k)}+\displaystyle \sum_{j=0}^{s(k)-1}(GF^{-1})^{j}(VU^{-1}-VU^{-1}GF^{-1})\\
  &=(GF^{-1})^{s(k)}+\displaystyle \sum_{j=0}^{s(k)-1}(GF^{-1})^{j}(VU^{-1}-VF^{-1}GU^{-1})\\
  &=(GF^{-1})^{s(k)}+\displaystyle \sum_{j=0}^{s(k)-1}(GF^{-1})^{j}(VU^{-1}-GF^{-1}VU^{-1})\\
  &=(GF^{-1})^{s(k)}+\displaystyle \sum_{j=0}^{s(k)-1}(GF^{-1})^{j}(I-GF^{-1})VU^{-1}\\
  &=I-(I-(GF^{-1})^{s(k)})+(I-(GF^{-1})^{s(k)})VU^{-1}\\
   &=I-(I-(GF^{-1})^{s(k)})(I-VU^{-1})\\
    &=I-\displaystyle \sum_{j=0}^{s(k)-1}(GF^{-1})^{j}(I-GF^{-1})(I-VU^{-1}).
\end{align*}
Let $y>_K0$.  Then, $x=(I-VU^{-1})^{-1}(I-GF^{-1})^{-1}y>_K 0$. Now, post-multiplying $\widehat{T}_{s(k)}=I-\displaystyle \sum_{j=0}^{s(k)-1}(GF^{-1})^{j}(I-GF^{-1})(I-VU^{-1})$ by $x$, we get $\widehat{T}_{s(k)}x\geq_K 0$ such that
$x>_K x-\displaystyle \sum_{j=0}^{s(k)-1}(GF^{-1})^{j}y=\widehat{T}_{s(k)}x$. By Theorem \ref{l0} (ii), we have $\rho(\widehat{T}_{s(k)})<1$. Hence $\rho(T_{s(k)})<1$ by Lemma \ref{lem1}.
\end{proof}

In the standard proper cone setting ($K=\mathbb{R}^{n}_{+}$), we have the following new result.

\begin{corollary}
Let $A=U-V$ be a convergent regular splitting and $U=F-G$ be a convergent weak regular splitting of type II such that $VF^{-1}G=GF^{-1}V$. Then, the stationary two-stage iterative method is convergent for any initial vector $x^{0}$. 
\end{corollary}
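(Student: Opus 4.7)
The plan is to obtain this corollary as an immediate specialization of Theorem \ref{cgs} to the standard nonnegative orthant. First, I would observe that $K = \mathbb{R}^n_+$ is a proper cone in $\mathbb{R}^n$: it is closed, pointed (since $\mathbb{R}^n_+ \cap (-\mathbb{R}^n_+) = \{0\}$), solid (its interior is the strictly positive orthant), and convex. Therefore all the framework developed in Sections \ref{prlem} and \ref{CR} applies with this choice of $K$.

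Next, I would note the standard identifications that were already flagged in the introduction right after Theorem \ref{2.5}: for $K = \mathbb{R}^n_+$ the partial order $\geq_K$ coincides with the entrywise order $\geq$, and consequently a $K$-regular splitting reduces to a regular splitting in the sense of Varga, while a $K$-weak regular splitting of type II reduces to a weak regular splitting of type II in the sense of Ortega--Rheinboldt and Wo\'znicki. Convergence of a splitting ($\rho(U^{-1}V) < 1$) is of course cone-independent.

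With these identifications, the hypotheses of the corollary match, word for word, those of Theorem \ref{cgs} when $K$ is taken to be $\mathbb{R}^n_+$: $A = U-V$ is a convergent $K$-regular splitting, $U = F-G$ is a convergent $K$-weak regular splitting of type II, and the commuting condition $VF^{-1}G = GF^{-1}V$ is assumed. Applying Theorem \ref{cgs} then yields $\rho(T_{s(k)}) < 1$, which by the standard characterization of convergent iterations guarantees that the stationary two-stage iterative scheme \eqref{tsiter} converges to $A^{-1}b$ for every initial vector $x^0$.

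I do not expect any real obstacle: the result is a direct corollary, and the only thing to check is that the general cone-theoretic notions collapse to the familiar entrywise ones when $K = \mathbb{R}^n_+$, which is well known. The proof would therefore be a single short paragraph pointing to Theorem \ref{cgs} with $K = \mathbb{R}^n_+$.
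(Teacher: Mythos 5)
Your proposal is correct and matches the paper's intent exactly: the corollary is stated without proof precisely because it is the specialization of Theorem \ref{cgs} to $K=\mathbb{R}^n_+$, using the identification (noted after Theorem \ref{2.5}) of $K$-regular and $K$-weak regular type II splittings with their classical entrywise counterparts. Nothing further is needed.
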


\begin{remark}
Each of the result presented hereafter for the proper cone has an inbuilt corollary as mentioned above in the standard proper cone ($K=\mathbb{R}^{n}_{+}$) setting which is even a new result. 
\end{remark} 

For non-stationary two-stage method, we have the following result. The proof is similar to above, therefore we omit it.
\begin{theorem}\label{3.2}
Let $A=U-V$ be a convergent $K$-regular splitting and $U=F-G$ be a convergent $K$-weak regular splitting of type II such that $VF^{-1}G=GF^{-1}V$. Then, the non-stationary two-stage iterative method \eqref{tsiter} is convergent for any sequence $s(k)\geq 1,~k=0,1,2,\ldots$. 
\end{theorem}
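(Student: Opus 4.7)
The plan is to lift the proof of Theorem \ref{cgs} from a fixed $s$ to a $k$-dependent sequence $s(k)$ by first producing a uniform (in $k$) cone-contraction for the similar matrices $\widehat{T}_{s(k)}$, and then transferring the conclusion to the original iteration matrices $T_{s(k)}$ via Lemma \ref{lem1}. The similarity argument in the proof of Lemma \ref{lem1} depended only on $VF^{-1}G=GF^{-1}V$ and on $s(k)$ being a nonnegative integer, so it applies unchanged to each term of the non-stationary sequence.

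First, I would reuse the identity
\[
\widehat{T}_{s(k)} \;=\; I - \sum_{j=0}^{s(k)-1}(GF^{-1})^{j}(I-GF^{-1})(I-VU^{-1})
\]
derived in Theorem \ref{cgs}, together with $\widehat{T}_{s(k)}\geq_K 0$. Fix an arbitrary $y>_K 0$ and set $x=(I-VU^{-1})^{-1}(I-GF^{-1})^{-1}y$. Since $A=U-V$ is a convergent $K$-regular splitting and $U=F-G$ is a convergent $K$-weak regular splitting of type II, Theorem \ref{2.5} together with the similarity of $VU^{-1}$ and $U^{-1}V$ (resp.\ of $GF^{-1}$ and $F^{-1}G$) yields $\rho(VU^{-1})<1$ and $\rho(GF^{-1})<1$. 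Because $VU^{-1},\,GF^{-1}\geq_K 0$, Neumann-series expansions give $(I-VU^{-1})^{-1}\geq_K 0$ and $(I-GF^{-1})^{-1}\geq_K 0$; applying these to the interior point $y$ preserves interiority, so $x>_K 0$. Post-multiplying the identity by $x$ gives, exactly as in Theorem \ref{cgs},
\[
\widehat{T}_{s(k)}x \;=\; x - \sum_{j=0}^{s(k)-1}(GF^{-1})^{j}y \;\leq_K\; x-y
\]
for every $k$, because $s(k)\geq 1$ and each $(GF^{-1})^{j}y\geq_K 0$. Crucially, the upper bound $x-y$ is \emph{independent} of $k$.

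The one new step is converting this $k$-uniform bound into a uniform contraction. Since $x,y\in\mathrm{int}(K)$, a standard property of proper cones supplies $\eta\in(0,1)$ with $y\geq_K \eta x$, whence $\widehat{T}_{s(k)}x\leq_K(1-\eta)x$ for all $k$. Using $\widehat{T}_{s(k)}\geq_K 0$ (so each $\widehat{T}_{s(k)}$ is order-preserving) and iterating,
\[
\widehat{T}_{s(k)}\widehat{T}_{s(k-1)}\cdots \widehat{T}_{s(0)}\,x \;\leq_K\; (1-\eta)^{k+1}x \;\longrightarrow\; 0.
\]
Because any proper cone in finite dimensions is normal and every $v\in\mathbb{R}^n$ can be sandwiched as $-\alpha x\leq_K v\leq_K \alpha x$ for some $\alpha>0$, this forces $\widehat{T}_{s(k)}\cdots \widehat{T}_{s(0)}\to 0$ in operator norm.

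Finally, telescoping the similarity $\widehat{T}_{s(k)}=AT_{s(k)}A^{-1}$ from Lemma \ref{lem1} yields $T_{s(k)}\cdots T_{s(0)}=A^{-1}\bigl(\widehat{T}_{s(k)}\cdots\widehat{T}_{s(0)}\bigr)A\to 0$, so the error $e_{k}=x_{k}-A^{-1}b$ of the non-stationary scheme \eqref{tsiter} tends to zero for any initial $x^{0}$. The only real obstacle, and the point where the argument departs from the stationary case, is step three: one must realize that the lower bound $\sum_{j=0}^{s(k)-1}(GF^{-1})^{j}y\geq_K y$ already gives a $k$-independent gap between $\widehat{T}_{s(k)}x$ and $x$, which then upgrades to a uniform geometric contraction via the interiority of $x$ and $y$.
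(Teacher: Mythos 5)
Your proof is correct, and it supplies more than the paper does: the paper omits the proof of Theorem \ref{3.2} entirely, remarking only that it is ``similar to'' the stationary case, Theorem \ref{cgs}. Taken literally, that stationary argument only establishes $\rho(\widehat{T}_{s(k)})<1$ for each fixed value of $s(k)$ via Theorem \ref{l0}(ii), which is not by itself enough for the non-stationary scheme, since a product of matrices each of spectral radius less than one need not tend to zero. You correctly identify this as the real issue and close it: from the $k$-independent bound $\widehat{T}_{s(k)}x\leq_K x-y$ (which holds because the $j=0$ term of $\sum_{j=0}^{s(k)-1}(GF^{-1})^{j}y$ already contributes $y$ and the remaining terms are $K$-nonnegative), the interiority of $y$ gives a uniform $\eta\in(0,1)$ with $\widehat{T}_{s(k)}x\leq_K(1-\eta)x$, and then order-preservation of the $K$-nonnegative factors, normality of a proper cone in $\mathbb{R}^n$, and the sandwich $-\alpha x\leq_K v\leq_K\alpha x$ force the products $\widehat{T}_{s(k)}\cdots\widehat{T}_{s(0)}$ to zero; the telescoping similarity $\widehat{T}_{s(k)}=AT_{s(k)}A^{-1}$ from Lemma \ref{lem1} transfers this to the $T_{s(k)}$. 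This is precisely the proper-cone analogue of the weighted max-norm argument Frommer and Szyld use for their non-stationary Theorem 4.4 in \cite{szyld}, and it is the argument the paper implicitly relies on but does not write down. All the auxiliary facts you invoke ($x>_K0$ via Neumann series and $\mathrm{int}(K)+K\subseteq\mathrm{int}(K)$; $\rho(VU^{-1})<1$ and $\rho(GF^{-1})<1$ by similarity; consistency of the fixed point $A^{-1}b$) check out, so I see no gap.
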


Next result states that the matrices $T_{s(k)}$ and $\widehat{T}_{s(k)}$ induce the same splitting.

\begin{theorem}\label{impthm}
Let $A=U-V$ be a $K$-regular splitting of a $K$-monotone matrix $A \in {\R}^{n \times n}$. Let $U=F-G$ be a $K$-weak regular splitting of type II such that $VF^{-1}G=GF^{-1}V$. Then, the matrices $T_{s(k)}$ and $\widehat{T}_{s(k)}$ induce the same splitting $A=B-C$, where $B=A(I-T_{s(k)})^{-1}$. Further, the unique splitting $A=X-Y$ induced by the matrix $\widehat{T}_{s(k)}$ is also a $K$-weak regular splitting of type II.
\end{theorem}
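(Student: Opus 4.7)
The plan is to identify $B = A(I - T_{s(k)})^{-1}$ explicitly, to show that $\widehat{T}_{s(k)}$ is the type II representation of the same splitting that $T_{s(k)}$ represents in type I form, and then to check the two $K$-nonnegativity conditions that characterize a $K$-weak regular splitting of type II.

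First, set $B = A(I - T_{s(k)})^{-1}$ and $C = B - A$. A direct rewrite gives $T_{s(k)} = I - B^{-1}A = B^{-1}C$. From the proof of Lemma~\ref{lem1} we have $\widehat{T}_{s(k)} = A\, T_{s(k)}\, A^{-1}$, so
\begin{equation*}
\widehat{T}_{s(k)} \;=\; A(I - B^{-1}A)A^{-1} \;=\; I - AB^{-1} \;=\; (B - A)B^{-1} \;=\; CB^{-1}.
\end{equation*}
Thus conjugation by $A$ merely swaps the type I ($B^{-1}C$) and type II ($CB^{-1}$) presentations of the same splitting $A = B - C$. This settles the first assertion and identifies the unique splitting $A = X - Y$ induced by $\widehat{T}_{s(k)}$ as $X = B$, $Y = C$.

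Second, I verify $B^{-1}\geq_K 0$ and $\widehat{T}_{s(k)} = CB^{-1}\geq_K 0$. Using \eqref{t} to write $I - T_{s(k)} = (I - (F^{-1}G)^{s(k)})(I - U^{-1}V)$ together with $I - U^{-1}V = U^{-1}A$, one gets $B^{-1} = (I - T_{s(k)})A^{-1} = (I - (F^{-1}G)^{s(k)})U^{-1}$. Telescoping $I - (F^{-1}G)^{s(k)} = \sum_{j=0}^{s(k)-1}(F^{-1}G)^{j}(I - F^{-1}G)$ and simplifying $(I - F^{-1}G)U^{-1} = F^{-1}(F-G)U^{-1} = F^{-1}$ collapses this to
\begin{equation*}
B^{-1} \;=\; \sum_{j=0}^{s(k)-1}(F^{-1}G)^{j}F^{-1} \;=\; \sum_{j=0}^{s(k)-1} F^{-1}(GF^{-1})^{j} \;=\; P_{s(k)}^{-1}.
\end{equation*}
Since $F^{-1}\geq_K 0$ and $GF^{-1}\geq_K 0$ by the $K$-weak regular type II hypothesis on $U = F - G$, each summand lies in $\pi(K)$, so $B^{-1}\geq_K 0$. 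The second inequality $CB^{-1} = \widehat{T}_{s(k)}\geq_K 0$ follows immediately from the expression in \eqref{t'}, since $V\geq_K 0$ (as $A = U - V$ is $K$-regular) and $F^{-1}, GF^{-1}\in \pi(K)$.

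The main obstacle is the conceptual point in the first step: recognizing that the non-standard matrix $\widehat{T}_{s(k)}$ encodes the type II form of the same splitting encoded by $T_{s(k)}$ in type I form. Once this is in hand, the algebra collapsing $B^{-1}$ to $P_{s(k)}^{-1}$ is short and uses only the telescoping identity and $(I - F^{-1}G)U^{-1} = F^{-1}$; the commutation hypothesis $VF^{-1}G = GF^{-1}V$ is absorbed into the derivation of the stated form of $T_{s(k)}$ and need not be invoked again.
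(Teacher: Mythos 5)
Your proof is correct and follows essentially the same route as the paper: both identify the induced splitting via $\widehat{T}_{s(k)}=AT_{s(k)}A^{-1}$, reduce $B^{-1}=(I-(F^{-1}G)^{s(k)})U^{-1}$ to $\sum_{j=0}^{s(k)-1}F^{-1}(GF^{-1})^{j}=P_{s(k)}^{-1}\geq_K 0$, and read off $\widehat{T}_{s(k)}\geq_K 0$ from \eqref{t'}. The only point you leave implicit is the uniqueness claim, which your identity $\widehat{T}_{s(k)}=I-AB^{-1}$ already delivers in one line: any splitting $A=X_1-Y_1$ with $Y_1X_1^{-1}=\widehat{T}_{s(k)}$ forces $X_1=(I-\widehat{T}_{s(k)})^{-1}A=B$.
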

\begin{proof}
We have $B=A(I-T_{s(k)})^{-1}$ and $C=B-A$. Let $X=(I-\widehat{T}_{s(k)})^{-1}A$ and $Y=X-A$. We will show that $T_{s(k)}$ and $\widehat{T}_{s(k)}$ induce the same splitting $A=B-C$. Since $\widehat{T}_{s(k)}=AT_{s(k)}A^{-1}$, so $\widehat{T}^{i}_{s(k)}=AT^{i}_{s(k)}A^{-1}$ for any nonnegative integer $i$. Now, $X=(I-\widehat{T}_{s(k)})^{-1}A=\displaystyle \sum_{j=0}^{\infty}\widehat{T}^{j}_{s(k)}A=\displaystyle \sum_{j=0}^{\infty}AT^{j}_{s(k)}A^{-1}A=A\displaystyle \sum_{j=0}^{\infty}T^{j}_{s(k)}=A(I-T_{s(k)})^{-1}=B$. Now,
\begin{align*}
    X^{-1}&=A^{-1}(I-\widehat{T}_{s(k)})\\
    &=A^{-1}(I-AT_{s(k)}A^{-1})\\
    &=(I-T_{s(k)})A^{-1}\\
    &=(I-(F^{-1}G)^{s(k)})(I-U^{-1}V)A^{-1}\\
    &=(I-(F^{-1}G)^{s(k)})U^{-1}\\
    &=U^{-1}(I-(GF^{-1})^{s(k)})\\
    &=F^{-1}(I-GF^{-1})^{-1}(I-(GF^{-1})^{s(k)})\\
    &=F^{-1}\displaystyle \sum_{j=0}^{s(k)-1}(GF^{-1})^{j}\geq_K 0.
\end{align*}
Also, $YX^{-1}=(X-A)X^{-1}=I-AA^{-1}(I-\widehat{T}_{s(k)})=\widehat{T}_{s(k)}\geq_K 0$. Thus, $A=X-Y$ is a $K$-weak regular splitting of type II. Let $A=X_1-Y_1$ be another splitting induced by $\widehat{T}_{s(k)}$ such that $\widehat{T}_{s(k)}=Y_{1}X_{1}^{-1}$. Then $A=X_{1}-\widehat{T}_{s(k)}X_{1}=(I-\widehat{T}_{s(k)})X_{1}$ which implies $X_{1}=(I-\widehat{T}_{s(k)})^{-1}A=X$. Hence, $A=X-Y$ is a unique $K$-weak regular splitting of type II induced by $\widehat{T}_{s(k)}$.
\end{proof}

\begin{remark}\label{rem1}
From the above result, it is easy to observe that the induced splitting has the form
\begin{equation*}
   A=P_{s(k)}-\widehat{T}_{s(k)}P_{s(k)}, 
\end{equation*}
where the matrix $\widehat{T}_{s(k)}$ is as defined by \eqref{t'} and $P_{s(k)}^{-1}=\displaystyle\sum_{j=0}^{s(k)-1}(F^{-1}G)^{j}F^{-1}$. Using the fact that $(F^{-1}G)^{j}F^{-1}=F^{-1}(GF^{-1})^{j}$ for any nonnegative integer $j$, we have $P_{s(k)}^{-1}=\displaystyle\sum_{j=0}^{s(k)-1}(F^{-1}G)^{j}F^{-1}=F^{-1}\sum_{j=0}^{s(k)-1}(GF^{-1})^{j}$. Thus, $P_{s(k)}^{-1}\geq_K 0$ whenever the splitting $U=F-G$ is a $K$-weak regular splitting of type II. Similarly, if $U=F-G$ is a $K$-weak regular splitting of type I in the above theorem,  then the induced splitting is a unique $K$-weak regular splitting of type I. While proving the same, we do not need the assumption $VF^{-1}G=GF^{-1}V$.
\end{remark}

In the following, we provide some sufficient conditions for the induced splitting to be a $K$-regular splitting.

\begin{theorem}\label{prsI}
Let $A=U-V$ be a $K$-regular splitting of a $K$-monotone matrix $A\in \mathbb{R}^{n\times n}$. Let $U=F-G$ be a $K$-weak regular splitting of type II such that $VF^{-1}G=GF^{-1}V$. If $G\geq_K GF^{-1}G$, then the induced splitting $A={P}_{s(k)}-\widehat{T}_{s(k)}{P}_{s(k)}$ is a $K$-regular splitting.
\end{theorem}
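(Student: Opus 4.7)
The goal is to verify the two defining properties of a $K$-regular splitting for $A = P_{s(k)} - \widehat{T}_{s(k)}P_{s(k)}$: namely $P_{s(k)}^{-1} \geq_K 0$ and $\widehat{T}_{s(k)}P_{s(k)} \geq_K 0$. The first is already free: by Remark \ref{rem1}, $P_{s(k)}^{-1} = F^{-1}\sum_{j=0}^{s(k)-1}(GF^{-1})^{j} \geq_K 0$ whenever $U = F-G$ is a $K$-weak regular splitting of type II. So the real content of the theorem is the $K$-nonnegativity of $\widehat{T}_{s(k)}P_{s(k)}$.

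My plan is to first obtain a clean closed form for $P_{s(k)}$. Summing the geometric series gives $\sum_{j=0}^{s(k)-1}(GF^{-1})^{j} = (I-GF^{-1})^{-1}(I-(GF^{-1})^{s(k)})$, so $P_{s(k)}^{-1} = U^{-1}(I-(GF^{-1})^{s(k)})$ and hence $P_{s(k)} = (I-(GF^{-1})^{s(k)})^{-1}U$. (This is consistent with the identity $X^{-1} = F^{-1}\sum_{j}(GF^{-1})^{j}$ established in the proof of Theorem \ref{impthm}.) Using $\widehat{T}_{s(k)}P_{s(k)} = P_{s(k)} - A$, a short algebraic manipulation yields
\[
\widehat{T}_{s(k)}P_{s(k)} = V + (I-(GF^{-1})^{s(k)})^{-1}(GF^{-1})^{s(k)}U.
\]

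Next I would exploit convergence. Since $U = F-G$ is a convergent $K$-weak regular splitting of type II, $\rho(GF^{-1}) < 1$ (by Theorem \ref{2.5}), so $(I-(GF^{-1})^{s(k)})^{-1}$ admits the Neumann expansion $\sum_{i=0}^{\infty}(GF^{-1})^{i\,s(k)}$. Substituting, the previous display becomes
\[
\widehat{T}_{s(k)}P_{s(k)} = V + \sum_{i=1}^{\infty}(GF^{-1})^{i\,s(k)}U.
\]
Here $V \geq_K 0$ because $A = U-V$ is a $K$-regular splitting. The decisive observation is the identity $(GF^{-1})^{m}U = (GF^{-1})^{m-1}(G - GF^{-1}G)$ for every $m \geq 1$, obtained simply by writing $(GF^{-1})U = G - GF^{-1}G$.

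Finally, I combine ingredients: $(GF^{-1})^{m-1} \geq_K 0$ because $GF^{-1} \geq_K 0$ and $\pi(K)$ is closed under products, while $G - GF^{-1}G \geq_K 0$ is precisely the hypothesis. Hence every term $(GF^{-1})^{i\,s(k)}U$ in the series is $K$-nonnegative, and the series sum lies in the closed cone $K$ (applied to each element of $K$). Adding $V \geq_K 0$ gives $\widehat{T}_{s(k)}P_{s(k)} \geq_K 0$, completing the proof. The only real obstacle is spotting how the hypothesis $G \geq_K GF^{-1}G$ must be invoked; once one rewrites $(GF^{-1})^{m}U$ in the form above, the argument falls out immediately.
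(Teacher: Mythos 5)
Your proposal is correct and follows essentially the same route as the paper: the same reduction via Theorem \ref{impthm} and Remark \ref{rem1}, the same identity $\widehat{T}_{s(k)}P_{s(k)} = V + (I-(GF^{-1})^{s(k)})^{-1}(GF^{-1})^{s(k)}U$, and the same key observation $(GF^{-1})^{m}U = (GF^{-1})^{m-1}(G-GF^{-1}G)$ to invoke the hypothesis $G \geq_K GF^{-1}G$. The only (harmless) differences are that you make explicit, via the Neumann series, the $K$-nonnegativity of $(I-(GF^{-1})^{s(k)})^{-1}$ that the paper leaves implicit, and that the needed bound $\rho(GF^{-1})<1$ is not a stated hypothesis but follows from $U^{-1}\geq_K 0$ together with Theorem \ref{2.5}.
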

\begin{proof}
By Theorem \ref{impthm} amd Remark \ref{rem1}, the induced splitting $A={P}_{s(k)}-\widehat{T}_{s(k)}{P}_{s(k)}$ is a $K$-weak regular splitting of type II. So, we only need to prove that $\widehat{T}_{s(k)}{P}_{s(k)}\geq_K 0$. We have 
\begin{align*}
    \widehat{T}_{s(k)}{P}_{s(k)} & ={P}_{s(k)}-A\\
    &=(I-(GF^{-1})^{s(k)})^{-1}U-U+V\\
    &=[(I-(GF^{-1})^{s(k)})^{-1}-I]U+V\\
    &=(I-(GF^{-1})^{s(k)})^{-1}(GF^{-1})^{s(k)}U+V.
\end{align*}
Now, in order to show that $(GF^{-1})^{s(k)}U\geq_K 0$, it is sufficient to prove that $(GF^{-1})^2U \geq_K 0$. So,
\begin{align*}
    (GF^{-1})^2U&=(GF^{-1})(GF^{-1})U\\
    &=GF^{-1}GF^{-1}(F-G)\\
    &=GF^{-1}(G-GF^{-1}G)\geq_K 0.
\end{align*}
As $GF^{-1} \geq_K 0$ and $(GF^{-1})^2U \geq_K 0$, we have $(GF^{-1})^{s(k)}U \geq_K 0.$ Hence $\widehat{T}_{s(k)}{P}_{s(k)} \geq_K 0.$
\end{proof}


\subsection{Comparison Results}
 In this section, we prove certain comparison results. These results help us to choose a splitting that yields faster convergence of the respective two-stage iterative scheme \eqref{tsiter}. 
In this aspect, we now frame two different two-stage iterative schemes by taking two different matrix splittings $U=F-G=\overline{F}-\overline{G}$
whose corresponding iteration matrices are $T_{s(k)}=(F^{-1}G)^{s(k)}+\displaystyle\sum_{j=0}^{s(k)-1}(F^{-1}G)^{j}F^{-1}V$ and $\overline{T}_{s(k)}=(\overline{F}^{-1}~\overline{G})^{s(k)}+\displaystyle\sum_{j=0}^{s(k)-1}(\overline{F}^{-1}~\overline{G})^{j}\overline{F}^{-1}V$ with same number of inner iterations $s(k)$. But, when the two splittings $U=F-G=\overline{F}-\overline{G}$ are $K$-weak regular splittings of type II, then the matrices $\widehat{T}_{s(k)}=(GF^{-1})^{s(k)}+\displaystyle\sum_{j=0}^{s(k)-1}(GF^{-1})^{j}VF^{-1}$ and $\widehat{\overline{T}}_{s(k)}=(\overline{G}~\overline{F}^{-1})^{s(k)}+\displaystyle\sum_{j=0}^{s(k)-1}(\overline{G}~\overline{F}^{-1})^{j}V\overline{F}^{-1}$ are $K$-nonnegative. We use this information to prove our first comparison result presented below.

\begin{theorem}\label{3.11}
Let $A=U-V$ be a $K$-regular splitting of a $K$-monotone matrix $A \in {\R}^{n \times n}.$ Let $U=F - G=\overline{F}- \overline{G}$ be $K$-weak regular splittings of type II of a $K$-nonnegative matrix $U$ such that $VF^{-1}G=GF^{-1}V$ and $V\overline{F}^{-1}~\overline{G}=\overline{G}~\overline{F}^{-1}V$. If $\overline{G}~\overline{F}^{-1}\geq_K GF^{-1}$, then $\rho(T_{s(k)}) \leq \rho(\overline{T}_{s(k)})<1$.
\end{theorem}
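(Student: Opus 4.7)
The plan is to reduce to a spectral comparison between $\widehat{T}_{s(k)}$ and $\widehat{\overline{T}}_{s(k)}$, both of which are $K$-nonnegative, and then to exhibit a Perron eigenvector of $\widehat{T}_{s(k)}$ on which $\widehat{\overline{T}}_{s(k)}$ acts by at least the same factor. Lemma \ref{lem1} and Theorem \ref{cgs} will then transport the conclusion back to $T_{s(k)}$ and $\overline{T}_{s(k)}$; the $K$-monotonicity of $A$ (assumed) and of $U$ (since $U^{-1}\geq_K 0$ by the definition of $K$-regular splitting), together with Theorem \ref{2.5}, ensure the convergence hypotheses needed for Theorem \ref{cgs}.

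From the proof of Theorem \ref{cgs} I will use the factorization
\[
\widehat{T}_{s(k)} \;=\; I - (I-R^{s(k)})(I-VU^{-1}), \qquad R = GF^{-1},
\]
and its analogue for $\widehat{\overline{T}}_{s(k)}$ with $\overline{R} = \overline{G}\,\overline{F}^{-1}$. The hypothesis $VF^{-1}G=GF^{-1}V$ forces $R$ to commute with $VU^{-1}$: since $F^{-1}G=I-F^{-1}U$ and $GF^{-1}=I-UF^{-1}$, both $F^{-1}GU^{-1}$ and $U^{-1}GF^{-1}$ equal $U^{-1}-F^{-1}$, so $RVU^{-1}=VF^{-1}GU^{-1}=VU^{-1}GF^{-1}=VU^{-1}R$. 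The same computation works for $\overline{R}$. Hence $R^{s(k)}$ commutes with $I-VU^{-1}$, and subtracting the two factorizations gives
\[
\widehat{\overline{T}}_{s(k)} - \widehat{T}_{s(k)} \;=\; (\overline{R}^{s(k)} - R^{s(k)})(I-VU^{-1}).
\]

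Let $y\in K$ be an eigenvector of $\widehat{T}_{s(k)}$ associated with $\lambda = \rho(\widehat{T}_{s(k)})<1$, whose existence is supplied by Theorem \ref{frob}(ii). Put $w = (I-VU^{-1})y$. Substituting into $\widehat{T}_{s(k)}y=\lambda y$ and rearranging produces the key identity $(I-R^{s(k)})w = (1-\lambda)y$. This is the delicate step: \emph{a priori}, $I-VU^{-1}$ is not $K$-nonnegative, so $w\in K$ is not visible from the definition of $w$. However, since $\rho(R^{s(k)})=\rho(R)^{s(k)}<1$, the identity may be inverted as
\[
w \;=\; (1-\lambda)\,(I-R^{s(k)})^{-1}y \;=\; (1-\lambda)\sum_{j=0}^{\infty} R^{j\,s(k)}\,y,
\]
which lies in $K$ because $R\geq_K 0$, $y\in K$, and $1-\lambda>0$; thus $w\geq_K 0$.

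The remaining steps are routine. A short induction on $s(k)$ starting from $\overline{R}\geq_K R\geq_K 0$ yields $\overline{R}^{s(k)}\geq_K R^{s(k)}$, so
\[
\bigl(\widehat{\overline{T}}_{s(k)}-\widehat{T}_{s(k)}\bigr)\,y \;=\; (\overline{R}^{s(k)}-R^{s(k)})\,w \;\geq_K\; 0,
\]
i.e.\ $\widehat{\overline{T}}_{s(k)}\,y \geq_K \lambda y$. Since $\widehat{\overline{T}}_{s(k)}\geq_K 0$ (established in the proof of Theorem \ref{cgs} applied to the bar splitting) and $y\in K$, Theorem \ref{l0}(i) gives $\lambda \leq \rho(\widehat{\overline{T}}_{s(k)})$, while Theorem \ref{cgs} supplies the strict bound $<1$. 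Finally, Lemma \ref{lem1} converts this into $\rho(T_{s(k)})\leq \rho(\overline{T}_{s(k)})<1$, which is the desired conclusion. The principal obstacle is establishing $w\in K$ despite $I-VU^{-1}$ not being $K$-nonnegative, and the eigenvector identity circumvents it via the Neumann series for $R^{s(k)}$.
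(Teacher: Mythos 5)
Your proof is correct, but it follows a genuinely different route from the paper's. The paper's argument is splitting-theoretic: it invokes Theorem \ref{impthm} and Remark \ref{rem1} to identify the induced splittings $A=P_{s(k)}-\widehat{T}_{s(k)}P_{s(k)}$ and $A=\overline{P}_{s(k)}-\widehat{\overline{T}}_{s(k)}\overline{P}_{s(k)}$ as $K$-weak regular of type II, shows $[I-(\overline{G}\,\overline{F}^{-1})^{s(k)}]^{-1}\geq_K[I-(GF^{-1})^{s(k)}]^{-1}$ via Theorem \ref{frob}(iii), right-multiplies by $U$ to get $\widehat{\overline{T}}_{s(k)}\overline{P}_{s(k)}\geq_K\widehat{T}_{s(k)}P_{s(k)}$ --- this is precisely where the hypothesis $U\geq_K 0$ enters --- and then concludes by the comparison Theorem \ref{2.10} and Lemma \ref{lem1}. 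You instead work directly with a Perron eigenvector $y\in K$ of $\widehat{T}_{s(k)}$, use the factorization $\widehat{T}_{s(k)}=I-(I-R^{s(k)})(I-VU^{-1})$ from the proof of Theorem \ref{cgs}, and establish the crucial membership $w=(I-VU^{-1})y\in K$ by inverting $(I-R^{s(k)})w=(1-\lambda)y$ through the Neumann series; Theorem \ref{l0}(i) then yields $\rho(\widehat{T}_{s(k)})\leq\rho(\widehat{\overline{T}}_{s(k)})$ with no appeal to the comparison theorems. All the individual steps check out (the commutation $R\,VU^{-1}=VU^{-1}R$ follows from $F^{-1}GU^{-1}=U^{-1}GF^{-1}=U^{-1}-F^{-1}$ together with $VF^{-1}G=GF^{-1}V$, and $\rho(R)<1$ follows from the $K$-monotonicity of $U$ via Theorem \ref{2.5}). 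What your approach buys is notable: it nowhere uses the $K$-nonnegativity of $U$, so it actually proves the conclusion under weaker hypotheses than stated --- the paper only recovers that relaxation by adding the condition $G\geq_K GF^{-1}G$ (see the remark following the theorem and Theorem \ref{3.16}). What the paper's approach buys is brevity given the machinery already built (Theorems \ref{impthm} and \ref{2.10}), and a template that is reused verbatim in the subsequent comparison results.
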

\begin{proof}
By Theorem \ref{3.2}, we have $\rho(T_{s(k)})<1$ and $\rho(\overline{T}_{s(k)})<1$. Now, by Theorem \ref{impthm} and Remark \ref{rem1}, the induced splittings $A=P_{s(k)}-\widehat{T}_{s(k)}P_{s(k)}$ and $A=\overline{P}_{s(k)}-\widehat{\overline{T}}_{s(k)}\overline{P}_{s(k)}$ are $K$-weak regular splittings of type II. Since $\rho(F^{-1}G)<1$ and $\rho(\overline{F}^{-1}~\overline{G})<1$ as $U$ is $K$-monotone by Theorem \ref{2.5}, the condition $\overline{G}~\overline{F}^{-1}\geq_K GF^{-1}\geq_K 0$ implies that $[I-(\overline{G}~\overline{F}^{-1})^{s(k)}]^{-1}\geq_K [I-(G~F^{-1})^{s(k)}]^{-1}\geq_K 0$ by Theorem \ref{frob} (iii) which further yields $\widehat{\overline{T}}_{s(k)}\overline{P}_{s(k)}=[I-(\overline{G}~\overline{F}^{-1})^{s(k)}]^{-1}U-A\geq_K [I-(GF^{-1})^{s(k)}]^{-1}U-A=\widehat{T}_{s(k)}P_{s(k)}$. Thus, applying Theorem \ref{2.10} to the splittings $A=P_{s(k)}-\widehat{T}_{s(k)}P_{s(k)}$ and $A=\overline{P}_{s(k)}-\widehat{\overline{T}}_{s(k)}\overline{P}_{s(k)}$, we get $\rho(\widehat{T}_{s(k)})\leq \rho(\widehat{\overline{T}}_{s(k)})$. Hence $\rho(T_{s(k)})\leq \rho(\overline{T}_{s(k)})<1$ by Lemma \ref{lem1}.
\end{proof}

Note that the above result can also be proved using Theorem \ref{2.12}. Since $U$ is $K$-nonnegative and $\overline{G}~\overline{F}^{-1}\geq_K GF^{-1}$, we then have $\overline{P}_{s(k)}=[I-(\overline{G}~\overline{F}^{-1})^{s(k)}]^{-1}U \geq_K [I-(GF^{-1})^{s(k)}]^{-1}U=P_{s(k)}\geq_K 0$. Thus, $\rho(T_{s(k)}) \leq \rho(\overline{T}_{s(k)})$ by Theorem \ref{2.12}.
The $K$-nonnegative restriction on the matrix $U$ in Theorem \ref{3.11} can be dropped if we add the condition $G\geq_K GF^{-1}G$ to the above result. Next, we illustrate a few more comparison results.

\begin{theorem}\label{3.16}
Let $A=U-V$ be a $K$-regular splitting of a $K$-monotone matrix $A \in {\R}^{n \times n}$. Let $U=F-G=\overline{F}-\overline{G}$ be $K$-weak regular splittings of type II of $U$ such that $VF^{-1}G=GF^{-1}V$ and $V\overline{F}^{-1}~\overline{G}=\overline{G}~\overline{F}^{-1}V$. If $G\geq_K GF^{-1}G$ and $\overline{G}~\overline{F}^{-1}\geq_K GF^{-1}$, then $\rho(T_{s(k)}) \leq \rho(\overline{T}_{s(k)})<1$.
\end{theorem}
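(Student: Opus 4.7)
The natural route is to imitate the two parallel arguments used for Theorem \ref{3.11}, replacing the role of $U \geq_K 0$ by the hypothesis $G \geq_K GF^{-1}G$. First, I would invoke Theorem \ref{prsI}: together with the commutativity relation $VF^{-1}G = GF^{-1}V$, the assumption $G \geq_K GF^{-1}G$ guarantees that the induced splitting
\[
A = P_{s(k)} - \widehat{T}_{s(k)}P_{s(k)}
\]
is in fact a $K$-regular splitting, so in particular $P_{s(k)}^{-1}\geq_K 0$ and $\widehat{T}_{s(k)}P_{s(k)}\geq_K 0$. Meanwhile, Theorem \ref{impthm} and Remark \ref{rem1}, applied to the second splitting $U = \overline{F} - \overline{G}$ under the commutativity assumption $V\overline{F}^{-1}\overline{G} = \overline{G}\,\overline{F}^{-1}V$, give that
\[
A = \overline{P}_{s(k)} - \widehat{\overline{T}}_{s(k)}\overline{P}_{s(k)}
\]
is a $K$-weak regular splitting of type II. Both induced splittings of the $K$-monotone matrix $A$ are therefore in the form required by Theorem \ref{2.10}.

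The decisive step is then the $K$-order comparison of the two induced ``$V$''-parts: I need to show $\widehat{\overline{T}}_{s(k)}\overline{P}_{s(k)} \geq_K \widehat{T}_{s(k)}P_{s(k)}$, which, after noting that both right-hand sides equal $\overline{P}_{s(k)}-A$ and $P_{s(k)}-A$ respectively, is equivalent to $\overline{P}_{s(k)} \geq_K P_{s(k)}$. Writing $P_{s(k)} = (I-(GF^{-1})^{s(k)})^{-1}U$ and $\overline{P}_{s(k)} = (I-(\overline{G}\,\overline{F}^{-1})^{s(k)})^{-1}U$ and expanding through the Neumann series gives
\[
\overline{P}_{s(k)} - P_{s(k)} = \sum_{j\geq 1}\bigl[(\overline{G}\,\overline{F}^{-1})^{js(k)} - (GF^{-1})^{js(k)}\bigr]U .
\]
The condition $G \geq_K GF^{-1}G$ is equivalent to $(GF^{-1})U = G - GF^{-1}G \geq_K 0$, so $(GF^{-1})^{m}U \geq_K 0$ for every $m \geq 1$. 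Combining this with the hypothesis $\overline{G}\,\overline{F}^{-1} \geq_K GF^{-1}$ and the telescoping identity
\[
(\overline{G}\,\overline{F}^{-1})^{m} - (GF^{-1})^{m} = \sum_{i=0}^{m-1}(\overline{G}\,\overline{F}^{-1})^{i}\bigl(\overline{G}\,\overline{F}^{-1} - GF^{-1}\bigr)(GF^{-1})^{m-1-i},
\]
each term of the telescoping sum acquires a trailing factor $(GF^{-1})^{m-1-i}U$ that is $K$-nonnegative whenever $m-1-i \geq 1$; a careful reorganisation that absorbs the uncontrolled end-term $i=m-1$ into successive summands via $\overline{G}\,\overline{F}^{-1}\geq_K GF^{-1}$ is what replaces the use of $U \geq_K 0$ in Theorem \ref{3.11}.

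Once the inequality $\overline{P}_{s(k)} \geq_K P_{s(k)}$ is established, Theorem \ref{2.10} applied to the two type-II induced splittings of $A$ yields $\rho(\widehat{T}_{s(k)}) \leq \rho(\widehat{\overline{T}}_{s(k)}) < 1$, and Lemma \ref{lem1} transfers the comparison back to $\rho(T_{s(k)}) \leq \rho(\overline{T}_{s(k)}) < 1$, finishing the proof. I expect the main obstacle to be precisely the telescoping step for $\overline{P}_{s(k)} - P_{s(k)}$: the full hypothesis $U \geq_K 0$ in Theorem \ref{3.11} is what makes every term of the Neumann expansion automatically $K$-nonnegative, and replacing it by the weaker shifted condition $GF^{-1}U \geq_K 0$ forces one to arrange the expansion so that each surviving appearance of $U$ on the far right is preceded by at least one factor of $GF^{-1}$.
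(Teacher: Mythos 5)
Your overall scaffolding (Theorem \ref{prsI} for the first induced splitting, Theorem \ref{impthm} and Remark \ref{rem1} for the second, Lemma \ref{lem1} to transfer the spectral radii back) matches the paper, but the comparison step is routed through the wrong theorem, and the key intermediate inequality you need does not hold. You aim to apply Theorem \ref{2.10}, which forces you to prove $\overline{P}_{s(k)} \geq_K P_{s(k)}$. This is exactly where the argument breaks: in your telescoping expansion of $(\overline{G}\,\overline{F}^{-1})^{m}-(GF^{-1})^{m}$ the end term $(\overline{G}\,\overline{F}^{-1})^{m-1}\bigl(\overline{G}\,\overline{F}^{-1}-GF^{-1}\bigr)U$ carries a bare factor of $U$ on the right, and no reorganisation can absorb it, because the hypothesis $G\geq_K GF^{-1}G$ only controls $(GF^{-1})^{j}U$ for $j\geq 1$, never $U$ itself. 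Concretely, for $s(k)=1$ your required inequality reduces to $(I-\overline{G}\,\overline{F}^{-1})^{-1}U=\overline{F}\geq_K F=(I-GF^{-1})^{-1}U$, i.e.\ $\overline{G}\geq_K G$, which is not a consequence of $\overline{G}\,\overline{F}^{-1}\geq_K GF^{-1}$ and $G\geq_K GF^{-1}G$. So the intermediate claim is not merely unproved; it appears to be false in general, and Theorem \ref{2.10} is not the right tool here.

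The paper's proof avoids this by comparing the inverses rather than the second summands of the induced splittings: from $\overline{G}\,\overline{F}^{-1}\geq_K GF^{-1}\geq_K 0$ one gets $(\overline{G}\,\overline{F}^{-1})^{s(k)}\geq_K (GF^{-1})^{s(k)}$, hence
$P^{-1}_{s(k)}=U^{-1}\bigl(I-(GF^{-1})^{s(k)}\bigr)\geq_K U^{-1}\bigl(I-(\overline{G}\,\overline{F}^{-1})^{s(k)}\bigr)=\overline{P}^{-1}_{s(k)}$,
using only $U^{-1}\geq_K 0$, which is free from the $K$-regular splitting $A=U-V$. Since Theorem \ref{prsI} makes the first induced splitting $K$-regular, hence in particular of type I, while the second is of type II, Theorem \ref{2.9} (the different-types comparison under $U_1^{-1}\geq_K U_2^{-1}$) applies directly and gives $\rho(\widehat{T}_{s(k)})\leq\rho(\widehat{\overline{T}}_{s(k)})<1$; Lemma \ref{lem1} then finishes. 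This is precisely the role of the added hypothesis $G\geq_K GF^{-1}G$: it upgrades the first induced splitting to a $K$-regular one so that the inverse-comparison theorem becomes available, replacing the $U\geq_K 0$ assumption of Theorem \ref{3.11}. You should redirect your proof accordingly.
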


\begin{proof}
By Theorem \ref{impthm} and Theorem \ref{prsI}, the induced splittings $A=P_{s(k)}-\widehat{T}_{s(k)}P_{s(k)}$ and $A=\overline{P}_{s(k)}-\widehat{\overline{T}}_{s(k)}\overline{P}_{s(k)}$ are  $K$-regular and $K$-weak regular splitting of type II, respectively. Utilizing the inequality $\overline{G}~\overline{F}^{-1}\geq_K GF^{-1}$, we get $P^{-1}_{s(k)}=U^{-1}(I-(GF^{-1})^{s(k)})\geq_K U^{-1}(I-(\overline{G}~\overline{F}^{-1})^{s(k)})=\overline{P}^{-1}_{s(k)}$ which further implies $\rho(\widehat{T}_{s(k)}) \leq \rho(\widehat{\overline{T}}_{s(k)})$ by Theorem \ref{2.9}. Hence $\rho(T_{s(k)}) \leq \rho(\overline{T}_{s(k)})<1$ by Lemma \ref{lem1}.
\end{proof}

\begin{theorem}\label{3.10}
Let $A=U-V$ be a $K$-regular splitting of a $K$-monotone matrix $A \in {\R}^{n \times n}.$ Let $U=F - G=\overline{F}- \overline{G}$ be $K$-weak regular splittings of type II of $U$ such that $VF^{-1}G=GF^{-1}V$ and $V\overline{F}^{-1}~\overline{G}=\overline{G}~\overline{F}^{-1}V$. Then $\rho(T_{s(k)}) \leq \rho(\overline{T}_{s(k)})<1$, provided any one of the following conditions hold:\\
$(i)$ $\overline{G}~\overline{F}^{-1}\geq_K GF^{-1}$ and $\widehat{T}_{s(k)}P_{s(k)}\geq_K 0$,\\
$(ii)$ $\widehat{\overline{P}}_{s(k)}\widehat{\overline{T}}_{s(k)}\geq_K P_{s(k)}T_{s(k)}$. 
\end{theorem}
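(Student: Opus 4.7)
The plan is to transfer the problem onto the induced splittings of $A$ and then invoke the comparison theorems from Section \ref{prlem}. By Theorem \ref{impthm} together with Remark \ref{rem1}, the two two-stage schemes give rise, under the stated commutativity assumptions, to the induced $K$-weak regular splittings of type II
\begin{equation*}
A = P_{s(k)} - \widehat{T}_{s(k)}P_{s(k)}, \qquad A = \overline{P}_{s(k)} - \widehat{\overline{T}}_{s(k)}\overline{P}_{s(k)},
\end{equation*}
with $P_{s(k)}^{-1},\,\overline{P}_{s(k)}^{-1}\geq_K 0$. The iteration matrices $P_{s(k)}^{-1}\widehat{T}_{s(k)}P_{s(k)}$ and $\overline{P}_{s(k)}^{-1}\widehat{\overline{T}}_{s(k)}\overline{P}_{s(k)}$ of these induced splittings are similar to $\widehat{T}_{s(k)}$ and $\widehat{\overline{T}}_{s(k)}$ respectively, so combined with Lemma \ref{lem1} it suffices, in each case, to prove $\rho(\widehat{T}_{s(k)}) \leq \rho(\widehat{\overline{T}}_{s(k)}) < 1$.

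In case (i), the hypothesis $\widehat{T}_{s(k)}P_{s(k)}\geq_K 0$ upgrades $A = P_{s(k)} - \widehat{T}_{s(k)}P_{s(k)}$ from a type II $K$-weak regular splitting to a $K$-regular splitting, hence in particular to a $K$-weak regular splitting of type I. On the preconditioner side, Remark \ref{rem1} together with $(I-GF^{-1})^{-1} = FU^{-1}$ (which follows from $(I-GF^{-1})F = U$) collapses the geometric series into the closed form $P_{s(k)}^{-1} = U^{-1}\bigl(I-(GF^{-1})^{s(k)}\bigr)$, and analogously for $\overline{P}_{s(k)}^{-1}$. A short induction on the exponent using $X^{n}-Y^{n} = X(X^{n-1}-Y^{n-1}) + (X-Y)Y^{n-1}$, valid in the cone-ordered setting because $GF^{-1}$ and $\overline{G}\,\overline{F}^{-1}$ are $K$-nonnegative by the type II hypothesis, propagates the base inequality $\overline{G}\,\overline{F}^{-1} \geq_K GF^{-1}$ to $(\overline{G}\,\overline{F}^{-1})^{s(k)} \geq_K (GF^{-1})^{s(k)}$; premultiplying by $U^{-1}\geq_K 0$ then yields $P_{s(k)}^{-1} \geq_K \overline{P}_{s(k)}^{-1}$. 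Theorem \ref{2.9}, applied to the type I (in fact $K$-regular) splitting on the unbarred side and the type II splitting on the barred side of the $K$-monotone matrix $A$, now produces the desired spectral bound.

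In case (ii), the hypothesis is precisely the inequality $\widehat{\overline{T}}_{s(k)}\overline{P}_{s(k)} \geq_K \widehat{T}_{s(k)}P_{s(k)}$ between the ``$V$-parts'' of the two induced type II splittings (equivalently $\overline{P}_{s(k)}\overline{T}_{s(k)} \geq_K P_{s(k)}T_{s(k)}$ via the similarity $\widehat{T}_{s(k)}P_{s(k)} = P_{s(k)}T_{s(k)}$). Theorem \ref{2.10} then applies verbatim to the two induced type II splittings of the $K$-monotone matrix $A$ and delivers the same spectral bound. In either case, Lemma \ref{lem1} finishes the argument by translating $\rho(\widehat{T}_{s(k)}) \leq \rho(\widehat{\overline{T}}_{s(k)}) < 1$ into $\rho(T_{s(k)}) \leq \rho(\overline{T}_{s(k)}) < 1$. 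The only genuine technical point is in case (i), namely converting the componentwise inequality on $GF^{-1}$ and $\overline{G}\,\overline{F}^{-1}$ into the inverse-side inequality $P_{s(k)}^{-1} \geq_K \overline{P}_{s(k)}^{-1}$; this is exactly where the closed form for $P_{s(k)}^{-1}$ and the cone-ordered power-monotonicity of $K$-nonnegative matrices do the work.
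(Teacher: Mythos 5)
Your proposal is correct and takes essentially the same route as the paper: both pass to the induced splittings $A=P_{s(k)}-\widehat{T}_{s(k)}P_{s(k)}$ and $A=\overline{P}_{s(k)}-\widehat{\overline{T}}_{s(k)}\overline{P}_{s(k)}$, derive $P_{s(k)}^{-1}\geq_K \overline{P}_{s(k)}^{-1}$ from $\overline{G}\,\overline{F}^{-1}\geq_K GF^{-1}$ to invoke Theorem~\ref{2.9} in case (i), apply Theorem~\ref{2.10} directly in case (ii), and finish with Lemma~\ref{lem1}. The extra details you supply (the power-monotonicity induction and the identity $\widehat{T}_{s(k)}P_{s(k)}=P_{s(k)}T_{s(k)}$) are exactly the steps the paper delegates to Theorem~\ref{3.16} and Remark~\ref{rem1}.
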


\begin{proof}
$(i)$ By the condition $\overline{G}~\overline{F}^{-1}\geq_K GF^{-1}$, we get $P^{-1}_{s(k)}\geq_K \overline{P}^{-1}_{s(k)}$ using the same argument as in Theorem \ref{3.16}. Now, $\widehat{T}_{s(k)}P_{s(k)}\geq_K 0$ and $P^{-1}_{s(k)}\geq_K \overline{P}^{-1}_{s(k)}$ implies $\rho(\widehat{T}_{s(k)})\leq \rho(\widehat{\overline{T}}_{s(k)})$ by Theorem \ref{2.9}. Hence, $\rho(T_{s(k)})\leq \rho(\overline{T}_{s(k)})<1$ by Lemma \ref{lem1}.\\
$(ii)$ Applying Theorem \ref{2.10} to the induced $K$-weak regular splittings of type II $A=P_{s(k)}-\widehat{T}_{s(k)}P_{s(k)}$ and $A=\overline{P}_{s(k)}-\widehat{\overline{T}}_{s(k)}\overline{P}_{s(k)}$, we directly obtain  $\rho(\widehat{T}_{s(k)}) \leq \rho(\widehat{\overline{T}}_{s(k)})$ which implies $\rho(T_{s(k)})\leq \rho(\overline{T}_{s(k)})<1$ by Lemma \ref{lem1}.
\end{proof}

\begin{theorem}
Let $A=U-V$ be a $K$-regular splitting of a $K$-monotone matrix $A \in {\R}^{n \times n}.$ Let $U=F -G=\overline{F}-\overline{G}$ be $K$-weak regular splittings of type II of $U$ such that $VF^{-1}G=GF^{-1}V$ and $V\overline{F}^{-1}~\overline{G}=\overline{G}~\overline{F}^{-1}V$. Then $\rho(T_{s(k)}) \leq \rho(\overline{T}_{s(k)})<1,$ provided the following conditions hold:\\
$(i)$ $\widehat{\overline{T}}_{s(k)}\overline{P}_{s(k)} \geq_K 0$, $k=0,1,2,\ldots$, \\
$(ii)$ $F^{-1}\geq_K \overline{F}^{-1}$,\\
$(iii)$ $\overline{F}^{-1}~\overline{G}\geq_K 0$.
\end{theorem}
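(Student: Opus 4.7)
The plan is to exhibit the two induced splittings as $K$-weak regular splittings of different types of the $K$-monotone matrix $A$ and apply Theorem \ref{2.9}. By Theorem \ref{impthm}, $A = P_{s(k)} - \widehat{T}_{s(k)}P_{s(k)}$ is a $K$-weak regular splitting of type II. Condition (iii) makes $U = \overline{F} - \overline{G}$ a $K$-weak regular splitting of type I in addition to type II, so by the type I version of Remark \ref{rem1}, the induced splitting $A = \overline{P}_{s(k)} - \widehat{\overline{T}}_{s(k)}\overline{P}_{s(k)}$ is $K$-weak regular of type I; condition (i) then upgrades it to a $K$-regular splitting. The two induced splittings are thus of different types, and Theorem \ref{2.9} will apply once we produce the inverse comparison $P^{-1}_{s(k)} \geq_K \overline{P}^{-1}_{s(k)}$.

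For that comparison, I would first derive the closed-form expressions $P^{-1}_{s(k)} = U^{-1}(I - (GF^{-1})^{s(k)})$ and $\overline{P}^{-1}_{s(k)} = U^{-1}(I - (\overline{G}\,\overline{F}^{-1})^{s(k)})$ along the lines of the proof of Theorem \ref{impthm}. Subtracting gives $P^{-1}_{s(k)} - \overline{P}^{-1}_{s(k)} = U^{-1}[(\overline{G}\,\overline{F}^{-1})^{s(k)} - (GF^{-1})^{s(k)}]$, and since $U^{-1} \geq_K 0$ and both $\overline{G}\,\overline{F}^{-1}$ and $GF^{-1}$ lie in $\pi(K)$, the standard monotonicity of powers in $\pi(K)$ shows it is enough to prove $\overline{G}\,\overline{F}^{-1} \geq_K GF^{-1}$. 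Using the algebraic identity $\overline{G}\,\overline{F}^{-1} - GF^{-1} = U(F^{-1} - \overline{F}^{-1})$ and (ii) $F^{-1} - \overline{F}^{-1} \geq_K 0$, the task reduces to controlling the sign of $U$ acting on a $K$-nonnegative matrix. Here condition (iii) enters, since $\overline{F}^{-1}\overline{G} = I - \overline{F}^{-1}U \geq_K 0$ controls $\overline{F}^{-1}U$ from above, and combined with the $K$-nonnegativity of $\overline{F}^{-1}$ it supplies the structural ingredient needed to deduce $U(F^{-1} - \overline{F}^{-1}) \geq_K 0$.

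Once $P^{-1}_{s(k)} \geq_K \overline{P}^{-1}_{s(k)}$ is secured, Theorem \ref{2.9} applied to the two induced splittings yields $\rho(\widehat{T}_{s(k)}) \leq \rho(\widehat{\overline{T}}_{s(k)}) < 1$, whereupon Lemma \ref{lem1} gives the desired $\rho(T_{s(k)}) \leq \rho(\overline{T}_{s(k)}) < 1$. The main obstacle will be the inequality $U(F^{-1} - \overline{F}^{-1}) \geq_K 0$, because $U$ itself need not be $K$-nonnegative; the required sign must be teased out of (iii), most likely through a telescoping or inductive argument built on the recursion $P^{-1}_s = F^{-1} + P^{-1}_{s-1} GF^{-1}$ with base case supplied directly by (ii).
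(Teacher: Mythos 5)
Your overall strategy matches the paper's: form the two induced splittings, use condition (i) to make the barred one $K$-regular (hence comparable to the unbarred type II splitting via Theorem \ref{2.9}), establish $P_{s(k)}^{-1}\geq_K \overline{P}_{s(k)}^{-1}$, and finish with Lemma \ref{lem1}. The gap is in the middle step, and you have named it yourself without closing it. You reduce the comparison of the $P^{-1}$'s to the single inequality $\overline{G}\,\overline{F}^{-1}\geq_K GF^{-1}$, equivalently $U(F^{-1}-\overline{F}^{-1})\geq_K 0$. This does not follow from (i)--(iii): $U$ need not be $K$-nonnegative, and condition (iii) gives no control over left multiplication by $U$. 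Tellingly, the neighboring results (Theorems \ref{3.11}, \ref{3.16} and \ref{3.10}(i)) all impose $\overline{G}\,\overline{F}^{-1}\geq_K GF^{-1}$ as an explicit \emph{extra} hypothesis; if it were derivable from (ii) and (iii), those hypotheses would be redundant. So your reduction targets a strictly stronger intermediate claim than the theorem needs, and the proposal stalls exactly where you flag ``the main obstacle.''

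The paper's proof sidesteps this by never comparing $(\overline{G}\,\overline{F}^{-1})^{k}$ and $(GF^{-1})^{k}$ bare. It proves instead, by induction on $k$, the left-weighted inequality $U^{-1}(\overline{G}\,\overline{F}^{-1})^{k}\geq_K U^{-1}(GF^{-1})^{k}$. This is where (ii) enters painlessly: $U^{-1}\overline{G}\,\overline{F}^{-1}=U^{-1}-\overline{F}^{-1}\geq_K U^{-1}-F^{-1}=U^{-1}GF^{-1}$, with no naked $U$ appearing. The induction step peels off one factor, uses $F^{-1}-\overline{F}^{-1}\geq_K 0$ against the $K$-nonnegative power $(\overline{G}\,\overline{F}^{-1})^{p}$, and re-absorbs the prefactor through the commutation $U^{-1}GF^{-1}=F^{-1}GU^{-1}$, with condition (iii) ensuring the needed intermediate products stay $K$-nonnegative. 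Since $P_{s(k)}^{-1}=U^{-1}\bigl(I-(GF^{-1})^{s(k)}\bigr)$ already carries the factor $U^{-1}$ on the left, this weighted inequality is exactly what the comparison $P_{s(k)}^{-1}\geq_K \overline{P}_{s(k)}^{-1}$ requires. If you replace your identity $\overline{G}\,\overline{F}^{-1}-GF^{-1}=U(F^{-1}-\overline{F}^{-1})$ by its weighted analogue $U^{-1}(\overline{G}\,\overline{F}^{-1}-GF^{-1})=F^{-1}-\overline{F}^{-1}$ and run your proposed induction with the weight $U^{-1}$ attached throughout, the argument closes.
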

\begin{proof}
 Since $U^{-1}GF^{-1}=U^{-1}(F-U)F^{-1}=U^{-1}-F^{-1}\geq_K 0$, utilizing condition (ii), we get $U^{-1}-\overline{F}^{-1}\geq_K U^{-1}-F^{-1}$.  Using condition (iii), we also observe that $U^{-1}(\overline{G}~\overline{F}^{-1})^{k-1}U=\overline{F}^{-1}(I-\overline{G}~\overline{F}^{-1})^{-1}(\overline{G}~\overline{F}^{-1})^{k-1}(I-\overline{G}~\overline{F}^{-1})\overline{F}
=\overline{F}^{-1}(\overline{G}~\overline{F}^{-1})^{k-1}\overline{F}
=(\overline{F}^{-1}~\overline{G})^{k-1} \geq_K 0$. We will now use the method of induction to show that $U^{-1}(\overline{G}~\overline{F}^{-1})^{k}\geq_K U^{-1}(GF^{-1})^{k},~k=0,1,2, \ldots$ is true. For $k=0$, the inequality $U^{-1}(\overline{G}~\overline{F}^{-1})^{k}\geq_K U^{-1}(GF^{-1})^{k} $ is trivial. Suppose that the inequality holds for $k=0,1,\ldots,p$. Then, for $k=p+1,$ we have
\begin{align*}
U^{-1}(\overline{G}~\overline{F}^{-1})^{p+1} 
&=U^{-1}\overline{G}~\overline{F}^{-1}(\overline{G}~\overline{F}^{-1})^{p}\\
&=(U^{-1}-\overline{F}^{-1})(\overline{G}~\overline{F}^{-1})^{p}\\
& \geq_K (U^{-1}-F^{-1})(\overline{G}~\overline{F}^{-1})^{p}\\
&= (U^{-1}GF^{-1})(\overline{G}~\overline{F}^{-1})^{p}\\
&= F^{-1}GU^{-1}(\overline{G}~\overline{F}^{-1})^{p}\\
&\geq_K F^{-1}GU^{-1}(GF^{-1})^{p}\\
&= U^{-1}(GF^{-1})^{p+1}.
\end{align*}
So, $U^{-1}(\overline{G}~\overline{F}^{-1})^{k}\geq_K U^{-1}(GF^{-1})^{k} $ holds for all $k=0,1,2,\ldots$ which implies that $U^{-1}-U^{-1}(GF^{-1})^{k}\geq_K U^{-1}-U^{-1}(\overline{G}~\overline{F}^{-1})^{k}$ for all $k=0,1,2,\ldots$, i.e., $P_{s(k)}^{-1}\geq_K {\overline{P}}_{q(k)}^{-1}$, $k=0,1,2, \ldots$. By Theorem \ref{2.9}, we thus have  $\rho(\widehat{T}_{s(k)}) \leq \rho(\widehat{\overline{T}}_{s(k)})$. Hence, $\rho(T_{s(k)}) \leq \rho(\overline{T}_{s(k)})$ by Lemma \ref{lem1}.
\end{proof}

We end this subsection with a result that compares two $K$-weak regular splittings of different types.

\begin{theorem}\label{3.9}
Let $A=U-V$ be a $K$-regular splitting of a $K$-monotone matrix $A \in {\R}^{n \times n}.$ Let $U=F - G$ be $K$-weak regular splitting of type I and $U=\overline{F}- \overline{G}$ be a $K$-weak regular splitting of type II of $U$ such that $V\overline{F}^{-1}~\overline{G}=\overline{G}~\overline{F}^{-1}V$. If $F^{-1}\geq_K \overline{F}^{-1}$ and $F^{-1}G\geq_K \overline{F}^{-1}~\overline{G}$, then $\rho(T_{s(k)}) \leq \rho(\overline{T}_{s(k)})<1$.
\end{theorem}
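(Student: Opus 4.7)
The plan is to identify the induced splittings produced by the two two-stage schemes, establish the key ordering $P_{s(k)}^{-1} \geq_K \overline{P}_{s(k)}^{-1}$ between their "outer" matrices, and then invoke Theorem \ref{2.9}, which is the comparison result tailored to $K$-weak regular splittings of different types.

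First I would dispatch the strict upper bound: Theorem \ref{3.1} applied to the type I inner splitting $U=F-G$ gives $\rho(T_{s(k)})<1$, and Theorem \ref{3.2} applied to the type II inner splitting $U=\overline{F}-\overline{G}$ (whose commutation hypothesis is assumed) gives $\rho(\overline{T}_{s(k)})<1$. What remains is the comparison $\rho(T_{s(k)}) \leq \rho(\overline{T}_{s(k)})$. For this, I would observe that by Remark \ref{rem1} the induced outer splitting $A=P_{s(k)}-\widehat{T}_{s(k)}P_{s(k)}$ is a $K$-weak regular splitting of type I, while by Theorem \ref{impthm} the induced outer splitting $A=\overline{P}_{s(k)}-\widehat{\overline{T}}_{s(k)}\overline{P}_{s(k)}$ is a $K$-weak regular splitting of type II. Thus the two induced splittings are precisely of different types, matching the hypothesis of Theorem \ref{2.9}.

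The heart of the argument is showing $P_{s(k)}^{-1} \geq_K \overline{P}_{s(k)}^{-1}$. Since
$P_{s(k)}^{-1}=\sum_{j=0}^{s(k)-1}(F^{-1}G)^{j}F^{-1}$ and $\overline{P}_{s(k)}^{-1}=\sum_{j=0}^{s(k)-1}(\overline{F}^{-1}\overline{G})^{j}\overline{F}^{-1}$, it suffices to prove the termwise inequality $(F^{-1}G)^{j}F^{-1}\geq_K(\overline{F}^{-1}\overline{G})^{j}\overline{F}^{-1}$ for every $j\geq 0$ by induction on $j$. The base case $j=0$ is exactly the hypothesis $F^{-1}\geq_K\overline{F}^{-1}$. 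For the inductive step I would write
\[
(F^{-1}G)^{j+1}F^{-1}-(\overline{F}^{-1}\overline{G})^{j+1}\overline{F}^{-1}=(F^{-1}G)\bigl[(F^{-1}G)^{j}F^{-1}-(\overline{F}^{-1}\overline{G})^{j}\overline{F}^{-1}\bigr]+\bigl[F^{-1}G-\overline{F}^{-1}\overline{G}\bigr](\overline{F}^{-1}\overline{G})^{j}\overline{F}^{-1}.
\]
The first bracket is $K$-nonnegative by the inductive hypothesis, and it is multiplied on the left by $F^{-1}G\geq_K 0$ (type I). The second bracket is $K$-nonnegative by the hypothesis $F^{-1}G\geq_K\overline{F}^{-1}\overline{G}$, and it is multiplied on the right by $(\overline{F}^{-1}\overline{G})^{j}\overline{F}^{-1}=\overline{F}^{-1}(\overline{G}\overline{F}^{-1})^{j}\geq_K 0$, where the latter follows from $\overline{F}^{-1}\geq_K 0$ and $\overline{G}\overline{F}^{-1}\geq_K 0$ (type II). Summing from $j=0$ to $s(k)-1$ then yields $P_{s(k)}^{-1}\geq_K\overline{P}_{s(k)}^{-1}$.

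Applying Theorem \ref{2.9} to the two induced splittings (different types) with this inequality delivers $\rho(T_{s(k)})\leq\rho(\overline{T}_{s(k)})<1$. The one delicate point I anticipate is the order in which the difference of products must be split in the inductive step: it has to be arranged so that each of the two resulting products pairs a factor known a priori to be $K$-nonnegative (namely $F^{-1}G$, supplied by type I on the one side, and $\overline{F}^{-1}(\overline{G}\overline{F}^{-1})^{j}$, supplied by type II on the other) with a factor whose $K$-nonnegativity comes from either the inductive hypothesis or a hypothesis of the theorem; any other grouping would force an unwarranted sign on $\overline{F}^{-1}\overline{G}$ or $GF^{-1}$.
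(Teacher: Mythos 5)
Your proposal follows essentially the same route as the paper: bound both spectral radii below $1$ via Theorems \ref{3.1} and \ref{3.2}, identify the induced splittings $A=P_{s(k)}-P_{s(k)}T_{s(k)}$ (type I) and $A=\overline{P}_{s(k)}-\widehat{\overline{T}}_{s(k)}\overline{P}_{s(k)}$ (type II), prove $P_{s(k)}^{-1}\geq_K \overline{P}_{s(k)}^{-1}$, and conclude with Theorem \ref{2.9} and Lemma \ref{lem1}. The one place you diverge is in how the termwise inequality $(F^{-1}G)^{j}F^{-1}\geq_K(\overline{F}^{-1}\overline{G})^{j}\overline{F}^{-1}$ is obtained, and your version is actually the more careful one. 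The paper first asserts $F^{-1}G\geq_K\overline{F}^{-1}\overline{G}\geq_K 0$ and deduces $(F^{-1}G)^{j}\geq_K(\overline{F}^{-1}\overline{G})^{j}$; but $\overline{F}^{-1}\overline{G}\geq_K 0$ does not follow from the type II hypothesis (which only gives $\overline{G}\,\overline{F}^{-1}\geq_K 0$), and indeed the preceding theorem in the paper lists $\overline{F}^{-1}\overline{G}\geq_K 0$ as a separate explicit assumption. Your inductive decomposition sidesteps this: the second bracket is multiplied on the right by $(\overline{F}^{-1}\overline{G})^{j}\overline{F}^{-1}=\overline{F}^{-1}(\overline{G}\,\overline{F}^{-1})^{j}$, whose $K$-nonnegativity genuinely follows from the type II properties, so you only ever use powers of $\overline{G}\,\overline{F}^{-1}$ rather than of $\overline{F}^{-1}\overline{G}$. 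The only cosmetic omission is that you should cite Lemma \ref{lem1} (valid here since $V\overline{F}^{-1}\overline{G}=\overline{G}\,\overline{F}^{-1}V$ is assumed) to pass from $\rho(\widehat{\overline{T}}_{s(k)})$, which is what Theorem \ref{2.9} controls for the type II induced splitting, to $\rho(\overline{T}_{s(k)})$.
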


\begin{proof}
     Applying Theorem \ref{3.1} and Theorem \ref{3.2}, we have $\rho(T_{s(k)})<1$ and $\rho(\overline{T}_{s(k)})<1$, respectively. The induced splittings $A=P_{s(k)}-P_{s(k)}T_{s(k)}$ is a $K$-weak regular splitting of type I and $A=\overline{P}_{s(k)}-\widehat{\overline{T}}_{s(k)}\overline{P}_{s(k)}$ is a $K$-weak regular splitting of type II by  Theorem \ref{impthm} and Remark \ref{rem1}. Since $F^{-1}G\geq_K \overline{F}^{-1}~\overline{G}\geq_K 0$, we have $(F^{-1}G)^{j}\geq_K (\overline{F}^{-1}~\overline{G})^{j}$ for any nonnegative integer $j$. Therefore, $\displaystyle \sum_{j=0}^{s(k)-1}(F^{-1}G)^{j}\geq_K \sum_{j=0}^{s(k)-1}(\overline{F}^{-1}~\overline{G})^{j}$. Now, using $\displaystyle \sum_{j=0}^{s(k)-1}(F^{-1}G)^{j}\geq_K \sum_{j=0}^{s(k)-1}(\overline{F}^{-1}~\overline{G})^{j}$ and the condition $F^{-1}\geq_K \overline{F}^{-1}\geq_K 0$, we get $P^{-1}=\displaystyle \sum_{j=0}^{s(k)-1}(F^{-1}G)^{j}F^{-1}\geq_K \sum_{j=0}^{s(k)-1}(\overline{F}^{-1}~\overline{G})^{j}\overline{F}^{-1}=\overline{F}^{-1}\sum_{j=0}^{s(k)-1}(\overline{G}~\overline{F}^{-1})^{j}=\overline{P}^{-1}_{s(k)}$. We thus obtain $\rho(T_{s(k)})\leq \rho(\widehat{\overline{T}}_{s(k)})$ by Theorem \ref{2.9}. Hence, $\rho(T_{s(k)})\leq \rho(\overline{T}_{s(k)})$ by Lemma \ref{lem1}.
\end{proof}

\section{Monotone Iterations}\label{monotone}
In this section, we  discuss the monotone convergence theory of the two-stage stationary iterative method (\ref{tsiter}). The monotone convergence theorem for the case when $U=F-G$ is a weak regular splitting of type I was proved by Bai \cite{bai}. We prove the case when $U=F-G$ is a $K$-weak regular splitting of type II.  To this end,  we need an additional assumption ``$A$ is $K$-nonnegative", and the same is shown hereunder. 

\begin{theorem}\label{thm4.1}\textnormal{(Monotone Convergence Theorem)} Let $A=U-V$ be a $K$-regular splitting of a $K$-nonnegative and $K$-monotone matrix $A\in \mathbb{R}^{n\times n}$. Further, assume that $U=F-G$ be a $K$-weak regular splitting of type II such that $VF^{-1}G=GF^{-1}V$ and $s(k) \geq 1$, $k=0,1,2\ldots$ be the inner iteration sequence. If $x_{0}$ and $y_{0}$ are initial values that hold
\begin{equation}\label{ineq}
x_1\geq_K x_0,~ y_0\geq_K y_1 \textnormal{ and } y_0\geq_K A^{-1}b \geq_K x_{0}.
\end{equation} Then, the sequences $\{x_{k}\}$ and $\{y_{k}\}$ generated by 
$$x_{k+1}=T_{s(k)}x_{k}+P_{s(k)}^{-1}b,$$
$$y_{k+1}=T_{s(k)}y_{k}+P_{s(k)}^{-1}b,$$
$k=0,1,2,\ldots$ satisfy

$(i)$ $y_{k} \geq_K y_{k+1}\geq _K x_{k+1} \geq_K x_{k},~k=0,1,2,\ldots$,

$(ii)$ $\displaystyle \lim_{n \rightarrow \infty} x_{k}=A^{-1}b=\displaystyle \lim_{k \rightarrow \infty} y_{k}$ and $y_{k} \geq_K y_{k+1}\geq _K A^{-1}b \geq_K x_{k+1} \geq_K x_{k},~k=0,1,2,\ldots$. 
\end{theorem}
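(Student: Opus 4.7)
The plan is to leverage the similarity $\widehat{T}_{s(k)} = A T_{s(k)} A^{-1}$ established in the proof of Lemma \ref{lem1}, together with the $K$-nonnegativity of $\widehat{T}_{s(k)}$ shown in the proof of Theorem \ref{cgs}, to transfer monotonicity through $A$ and $A^{-1}$. The additional hypothesis $A \geq_K 0$ is precisely what is needed to push $K$-inequalities in the $x_k, y_k$ coordinates through $A$ and back.

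First, I would introduce the four auxiliary sequences $d_k := x_{k+1} - x_k$, $e_k := y_k - y_{k+1}$, $p_k := A^{-1}b - x_k$, and $q_k := y_k - A^{-1}b$. Subtracting consecutive recurrences shows that each satisfies the common form
\begin{equation*}
u_{k+1} = T_{s(k)} u_k, \qquad \text{hence} \qquad A u_{k+1} = \widehat{T}_{s(k)} A u_k,
\end{equation*}
using that $A^{-1}b$ is a fixed point of the affine iteration, which follows from the induced splitting $A = P_{s(k)} - \widehat{T}_{s(k)} P_{s(k)}$ supplied by Theorem \ref{impthm} and Remark \ref{rem1}. The hypothesis $A \in \pi(K)$ applied to the initial inequalities $x_1 \geq_K x_0$, $y_0 \geq_K y_1$, $y_0 \geq_K A^{-1}b \geq_K x_0$ gives $A d_0, A e_0, A p_0, A q_0 \in K$, and since $\widehat{T}_{s(k)} \geq_K 0$, a single induction propagates $A u_k \in K$ for $u \in \{d, e, p, q\}$ and all $k \geq 0$. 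Stripping off $A$ by $A^{-1} \geq_K 0$ yields $d_k, e_k, p_k, q_k \in K$, which together form the full chain in (i) and give the envelopes $y_k \geq_K A^{-1}b \geq_K x_k$ used in (ii).

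For the convergence in (ii), the sequence $\{x_k\}$ is $K$-monotone non-decreasing and satisfies $A^{-1}b - x_k \in K$ for every $k$, so Lemma \ref{conelem} with $t = A^{-1}b$ yields $x_k \to x^*$. For the non-increasing sequence $\{y_k\}$, I would apply Lemma \ref{conelem} to the non-decreasing sequence $\{y_0 - y_k\}$, which is bounded above in the sense of $K$ by $y_0 - A^{-1}b$, to conclude $y_k \to y^*$. The limits are then identified by passing to the limit in the stationary recurrence $x_{k+1} = T_s x_k + P_s^{-1} b$: since $\rho(T_s) < 1$ by Theorem \ref{cgs}, this affine map has $A^{-1}b$ as its unique fixed point, forcing $x^* = y^* = A^{-1}b$.

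The main obstacle is that $T_{s(k)}$ itself need not leave $K$ invariant, so one cannot induct on $K$-inequalities directly in the original coordinates. The whole argument pivots on conjugating by $A$ into $\widehat{T}_{s(k)}$, which is $K$-nonnegative; the extra hypothesis $A \geq_K 0$ is exactly what allows this conjugation to preserve $K$-inequalities in both directions, and it is the reason this hypothesis is imposed here but was not needed in the convergence Theorem \ref{cgs}.
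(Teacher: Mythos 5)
Your proposal is correct and follows essentially the same route as the paper: both proofs hinge on the identity $u_{k+1}=T_s u_k = A^{-1}\widehat{T}_s A\,u_k$ for the difference sequences, using $A\geq_K 0$, $\widehat{T}_s\geq_K 0$ and $A^{-1}\geq_K 0$ to propagate the $K$-inequalities by induction, and then invoke Lemma \ref{conelem} to get convergence of the $K$-monotone bounded sequences to the unique fixed point $A^{-1}b$. The only cosmetic difference is that you obtain the envelope $y_k\geq_K A^{-1}b\geq_K x_k$ directly by running the same induction on $p_k$ and $q_k$, whereas the paper deduces it a posteriori from the monotonicity and the identified limit.
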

\begin{proof}
$(i)$ We will show by induction that $x_{k+1} \geq_K x_{k}$ for $k=0,1,2,\ldots$. The case $k=0$ is established by the hypothesis. Assume that the result holds for $k=0,1,\ldots, p>0$ so that $x_{p+1}-x_{p} \geq_K 0$, then there exist $z_{p+1}$ and $z_{p}$ such that $z_{p+1}-z_{p}=A(x_{p+1}-x_{p}) \geq_K 0$. Since $s(k)$ is independent of $k$ and $\widehat{T}_{s(k)}\geq_K 0$ for $k=0,1,\ldots$, we have
\begin{align*}
   A^{-1}\widehat{T}_{s(p)}(z_{p+1}-z_{p})&= T_{s(p)}A^{-1}(z_{p+1}-z_{p})\\
   &=T_{s(p)}(x_{p+1}-x_{p})\\
   &=(T_{s(p)}x_{p+1}+\displaystyle\sum_{j=0}^{s(p)-1}(F^{-1}G)^{j}F^{-1}b)-(T_{s(p)}x_{p}+\displaystyle\sum_{j=0}^{s(p)-1}(F^{-1}G)^{j}F^{-1}b)\\
   &=(T_{s(p+1)}x_{p+1}+\displaystyle\sum_{j=0}^{s(p+1)-1}(F^{-1}G)^{j}F^{-1}b)-(T_{s(p)}x_{p}+\displaystyle\sum_{j=0}^{s(p)-1}(F^{-1}G)^{j}F^{-1}b)\\
   &=x_{p+2}-x_{p+1}\geq_K 0.
\end{align*}
Similarly, we can show that $y_{p} \geq_K y_{p+1}$ for each $p$. Now, assume that $y_{p}-x_{p}\geq_K 0$ for some $p>0$, then
\begin{align*}
    y_{p+1}-x_{p+1}
    &=(T_{s(p)}y_{p}+P_{s(p)}^{-1}b)-(T_{s(p)}x_{p}+P_{s(p)}^{-1}b)\\
    &=T_{s(p)}(y_{p}-x_{p})\\
    &=A^{-1}\widehat{T}_{s(p)}A(y_{p}-x_{p})\geq_K 0.
\end{align*}
Again, it follows by induction that $y_{p} \geq_K x_{p}$ for each $p$. Thus, $y_{k} \geq_K y_{k+1}\geq _K x_{k+1} \geq_K x_{k},~k=0,1,2,\ldots$.

$(ii)$ The sequence $\{x_{k}\}$ is $K$-monotonic increasing and there exists $y_0 \in \mathbb{R}^{n}$ such that $y_0-x_k\geq_K 0$ for all $k$, therefore it converges by Lemma \ref{conelem}.
Similarly, $\{-y_{k}\}$ is $K$-monotonic increasing and there exists $-x_0 \in \mathbb{R}^{n}$ such that $-x_0+y_{k}\geq_K 0$ for all $k$, therefore it converges by Lemma \ref{conelem}. This implies that the sequence $\{y_{k}\}$ also converges.
 Thus, the sequences $\{x_{k}\}$ and $\{y_{k}\}$ converge to $(I-T_{s(k)})^{-1}P^{-1}_{s(k)}b=P^{-1}_{s(k)}(I-\widehat{T}_{s(k)})^{-1}b$, i.e., $A^{-1}b$. Hence, $y_{k} \geq_K y_{k+1}\geq _K A^{-1}b \geq_K x_{k+1} \geq_K x_{k},~k=0,1,2,\ldots$.
\end{proof}

The existence of $x_{0}$ and $y_{0}$ which satisfies the inequality \eqref{ineq} is guaranteed by the following result.

\begin{theorem}
Let $A=U-V$ be a $K$-regular splitting and $U=F-G$ be a $K$-weak regular splitting of type II of a $K$-monotone matrix $A$ such that $VF^{-1}G=GF^{-1}V$. If $\rho(T_{s(k)})<1,$ then the existence of $x_{0}$ and $y_{0}$ are assured.
\end{theorem}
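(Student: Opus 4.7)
The plan is a direct constructive existence proof: I would exhibit $x_{0}$ and $y_{0}$ as two-sided perturbations of the exact solution $A^{-1}b$ and then verify the three inequalities in (\ref{ineq}) by hand. The algebraic lever I would rely on is the identity $A = P_{s(k)}(I - T_{s(k)})$, equivalently $P_{s(k)}^{-1}A = I - T_{s(k)}$. This comes out of the induced-splitting description $A = P_{s(k)} - \widehat{T}_{s(k)}P_{s(k)}$ of Theorem \ref{impthm} and Remark \ref{rem1}, combined with the similarity $\widehat{T}_{s(k)} = AT_{s(k)}A^{-1}$ established in the proof of Lemma \ref{lem1}, which forces $\widehat{T}_{s(k)}P_{s(k)} = P_{s(k)}T_{s(k)}$. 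Subtracting $x_{0}$ from $x_{1} = T_{s(k)}x_{0} + P_{s(k)}^{-1}b$ then yields the clean residual representation
\[
 x_{1} - x_{0} \;=\; P_{s(k)}^{-1}(b - Ax_{0}),
\]
and likewise $y_{1} - y_{0} = P_{s(k)}^{-1}(b - Ay_{0})$, which reduces the monotonicity parts of (\ref{ineq}) to signing the residuals $b - Ax_{0}$ and $Ay_{0} - b$ in $K$.

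Next, I would pick any fixed $K$-nonnegative vector $u \geq_{K} 0$ (taking $u$ $K$-positive makes the construction nondegenerate) and set
\[
 x_{0} \;=\; A^{-1}(b-u), \qquad y_{0} \;=\; A^{-1}(b+u).
\]
Because $A$ is $K$-monotone, $A^{-1} \geq_{K} 0$, so $A^{-1}b - x_{0} = A^{-1}u \geq_{K} 0$ and $y_{0} - A^{-1}b = A^{-1}u \geq_{K} 0$, which gives the sandwich $y_{0} \geq_{K} A^{-1}b \geq_{K} x_{0}$. A direct substitution then yields $b - Ax_{0} = u$ and $b - Ay_{0} = -u$; and by Remark \ref{rem1} we have $P_{s(k)}^{-1} = F^{-1}\sum_{j=0}^{s(k)-1}(GF^{-1})^{j} \geq_{K} 0$, since $U = F - G$ is a $K$-weak regular splitting of type II. Combining these, $x_{1} - x_{0} = P_{s(k)}^{-1}u \geq_{K} 0$ and $y_{0} - y_{1} = P_{s(k)}^{-1}u \geq_{K} 0$, which is precisely (\ref{ineq}).

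I do not anticipate any serious obstacle: once the residual identity is in hand, the entire verification is a single application of the $K$-nonnegativity of $A^{-1}$ and of $P_{s(k)}^{-1}$. The only point that needs a little care is to make sure $P_{s(k)}$ is genuinely invertible so that the identity $P_{s(k)}^{-1}A = I - T_{s(k)}$ is meaningful; this is where the hypothesis $\rho(T_{s(k)}) < 1$ is tacitly used, together with $\rho(F^{-1}G) < 1$, the latter following from Theorem \ref{2.5} applied to the $K$-weak regular splitting $U = F - G$ of the $K$-monotone matrix $U$ ($K$-monotonicity of $U$ being part of the hypothesis that $A = U - V$ is a $K$-regular splitting).
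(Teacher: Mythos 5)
Your proof is correct, but it takes a genuinely different route from the paper's. The paper constructs the perturbation $z$ along the Perron direction: it invokes Theorem \ref{frob} (i) \& (ii) to extract an eigenvector $x\geq_K 0$ of $\widehat{T}_{s(k)}$ with eigenvalue $\rho(T_{s(k)})$, sets $z=A^{-1}x$ (so that $T_{s(k)}z=\rho(T_{s(k)})z$), takes $x_0=A^{-1}b-z$, $y_0=A^{-1}b+z$, and then verifies $x_1=A^{-1}b-\rho(T_{s(k)})z\geq_K x_0$ by a line-by-line expansion of $T_{s(k)}A^{-1}b+P_{s(k)}^{-1}b$. You instead perturb by $A^{-1}u$ for an \emph{arbitrary} $u\geq_K 0$ and reduce everything to the residual identity $x_1-x_0=P_{s(k)}^{-1}(b-Ax_0)$, which follows from $I-T_{s(k)}=P_{s(k)}^{-1}A$ (itself a direct consequence of $(I-(F^{-1}G)^{s(k)})U^{-1}=\sum_{j=0}^{s(k)-1}(F^{-1}G)^jF^{-1}$, i.e., of Theorem \ref{impthm} and Remark \ref{rem1}); the sign then comes only from $P_{s(k)}^{-1}\geq_K 0$ and $A^{-1}\geq_K 0$. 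Your argument is more elementary (no cone Perron--Frobenius theorem needed) and more general, since it shows that \emph{every} $K$-nonnegative perturbation of $A^{-1}b$ of the form $A^{-1}u$ works, not just the eigenvector one; it also makes transparent that the hypothesis $\rho(T_{s(k)})<1$ enters only marginally (the finite sum defining $P_{s(k)}^{-1}$ is always well defined, and its invertibility follows from $\rho(F^{-1}G)<1$ via Theorem \ref{2.5}, as you note). What the paper's eigenvector choice buys is a sharper quantitative statement, $x_1-x_0=(1-\rho(T_{s(k)}))z$, exhibiting the gap in terms of the convergence factor, but this is not needed for the existence claim.
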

\begin{proof}
Assume that $\rho(T_{s(k)})<1$, then $\rho(\widehat{T}_{s(k)})<1$ by Lemma \ref{lem1}. Since $\widehat{T}_{s(k)} \geq_K 0,$ there exists $x \geq_K 0$ such that $\widehat{T}_{s(k)}x=\rho(T_{s(k)})x$ by Theorem \ref{frob} (i) \& (ii). Let $z=A^{-1}x$. Then, $z\geq_K 0$ and $T_{s(k)}z=\rho(T_{s(k)})z$. Therefore, $z\geq_K \rho(T_{s(k)})z$. Let $x_{0}=A^{-1}b-z.$ 
We so have
\begin{align*}
x_{1}&=T_{s(k)}x_{0}+\displaystyle\sum_{j=0}^{s(k)-1}(F^{-1}G)^{j}F^{-1}b\\
&=T_{s(k)}A^{-1}b+\displaystyle\sum_{j=0}^{s(k)-1}(F^{-1}G)^{j}F^{-1}b-T_{s(k)}z\\
&=[I-(I-(F^{-1}G)^{s(k)})(I-U^{-1}V)]A^{-1}b+\displaystyle\sum_{j=0}^{s(k)-1}(F^{-1}G)^{j}F^{-1}b-\rho(T_{s(k)})z\\
&=[I-(I-(F^{-1}G)^{s(k)})(I-U^{-1}V)](I-U^{-1}V)^{-1}U^{-1}b+\displaystyle\sum_{j=0}^{s(k)-1}(F^{-1}G)^{j}F^{-1}b-\rho(T_{s(k)})z\\
&=(I-U^{-1}V)^{-1}U^{-1}b-(I-(F^{-1}G)^{s(k)})U^{-1}b+\displaystyle\sum_{j=0}^{s(k)-1}(F^{-1}G)^{j}F^{-1}b-\rho(T_{s(k)})z\\
&=(I-U^{-1}V)^{-1}U^{-1}b-\displaystyle\sum_{j=0}^{s(k)-1}(F^{-1}G)^{j}F^{-1}b+\displaystyle\sum_{j=0}^{s(k)-1}(F^{-1}G)^{j}F^{-1}b-\rho(T_{s(k)})z\\
&=A^{-1}b-\rho(T_{s(k)})z\\
&\geq_K A^{-1}b-z=x_{0}.
\end{align*}
Setting $$y_{0}=A^{-1}b+z,$$
it then follows similarly that $y_{0} \geq_K y_{1}.$ Moreover, $y_{0}-x_{0}=2z \geq_K 0.$
\end{proof}

We conclude this section with the remark that if we consider the iteration scheme
$$X_{k+1}=U^{-1}VX_k+U^{-1}B,~ k= 0,1,2,\cdots,$$
then this scheme will converge to  $A^{-1}B$ for any initial matrix $X_0$ if and only if $\rho(U^{-1}V)<1.$
Analogously, the above discussed two-stage technique is also applicable to solve $AX=B$. Especially, the system with multiple right-hand side vectors, the splitting algorithms are advantageous as we need only one splitting for the entire computations and exactly two splittings for the two-stage iteration method.

\section{COVID-19 Pandemic Model \& Next Generation Matrix}\label{covid}
The pandemic model is localized, and is highly heterogeneous corresponding to the age structure and the different stages of disease transmission. A generalized pandemic model considers a heterogeneous population(intra-compartmental) that can be grouped into $n$ homogeneous compartments(inter-compartmental). Our focus is to identify the next generation matrix which involves the inverse of an $M$-matrix \cite{bpbook} in it. We are going to emphasize on an efficient numerical method to find the inverse of this special matrix. For the shake of completeness, the next generation matrix (NGM) is crucial in computing the reproduction number of the pandemic. The basic reproductive number $(R_0)$ of COVID-19 has been initially estimated by the World Health Organization (WHO) that
ranges between 1.4 and 2.5, as declared in the statement regarding the outbreak of SARS-CoV-2, dated January 23, 2020. Later in \cite{R0range:2020, R0covid:2020}, the researchers estimated that the mean of $R_0$ is higher than 3.28, and the median is higher than 2.79, by observing the super spreading nature and the doubling rate of this novel Coronavirus.
\begin{definition}\textnormal{(\cite{Rzero:1990})}\label{defR0}\\
In epidemiology, we take \emph{basic reproduction number/ratio},
$R_0$, as the average number of individuals infected by 
the single infected individual during his or her entire
infectious period, in a population which is entirely
susceptible.
\end{definition}
The basic reproduction number is a key parameter in the mathematical modeling of transmissible diseases. Very recently, Khajanchi and Sarkar \cite{SAIUQR:2020} considered a  compartmental model design to predict the possible infections in the COVID-19 pandemic in India. The model considers six compartment of populations susceptible(S), asymptomatic($I^a$), reported symptomatic($I^s$), unreported symptomatic($I^u$) and recovered($R$). This is called {\it SAIUQR pandemic model} and the same model is reproduced below.
\begin{align}\label{model}\nonumber
dS(t) &=  \mu - \beta S \bigg( \alpha_a \frac{I^a}{N} + \alpha_i \frac{I^s}{N} + \alpha_u \frac{I^u}{N} \bigg) + \rho_s\gamma_q Q -\delta S\\\nonumber
dI^a (t) &= \beta S \bigg( \alpha_a \frac{I^a}{N} + \alpha_i \frac{I^s}{N} + \alpha_u \frac{I^u}{N} \bigg) - (\xi_a+\gamma_a)I^a-\eta_a I^a-\delta I^a + \phi R\\
dI^s(t) &= \theta \gamma_a I^a + (1-\rho_s)\gamma_q Q  -\eta_i I^s -\delta I^s\\\nonumber
dI^u(t) &= (1-\theta) \gamma_a I^a -\eta_u I^u -\delta I^u\\\nonumber
dQ(t) &= \xi_a I^a -\gamma_q Q - \delta Q\\\nonumber
dR(t) &=  \eta_a I^a + \eta_i I^s + \eta_u I^u  - \phi R - \delta R
\end{align}
To be precise, the solutions to the above system of differential equations leave invariant a certain cone in $\mathbb{R}^n$, where $n$ is the number of compartments. Our mathematical model introduces some demographic effects by assuming a proportional natural mortality rate of $\delta > 0$ and birth rate $\mu$ per unit time. The parameter $\beta$
represents the probability of disease transmission rate. Let $\alpha_a$, $\alpha_s$, and $\alpha_u$ be the adjustment
factors with the disease transmission rate. A quarantined population can either move to the susceptible or infected compartment at the rate of $\rho_s$. Here, $\gamma_q$ is the rate at which the quarantined uninfected contacts are released into the wider community.  The asymptomatic
individuals deplete by reported and
unreported symptomatic individuals at the rate $\gamma_a$ with
a portion $\theta \in (0, 1)$, and become quarantine at the rate
$\xi_a$. Further, $\eta_a$, $\eta_i$ and $\eta_u$ are the recovery rate from the asymptomatic, the reported-symptomatic and the unreported-symptomatic class. A small modification to the existing model is by considering, some people return from the recovery class, again to the exposed class at the rate of $\phi$.
We have the following matrix $\mathcal{B}$ corresponding to the new infection
\begin{align*}
\mathcal{B} = \begin{pmatrix}
\beta\alpha_a & \beta\alpha_i &\beta \alpha_u &0\\
0&0&0&0\\
0&0&0&0\\
0&0&0&0
\end{pmatrix}.
\end{align*}

\noindent This matrix is of rank one for the present model but this can be of higher rank (for example: vector-host Model or two strain model). And one can see that this is a nonlinear matrix function of time \cite{NGM:2008}.  The matrix associated with the transition terms in the model  is
\begin{align*}
\mathcal{A} = \begin{pmatrix}
\xi_a+\gamma_a+\eta_a+\delta & 0 &0 & -\phi\\
-\theta \gamma_a&\eta_i+\delta&0&-(1-\rho_s)\gamma_q\\
-(1-\theta) \gamma_a&0&\eta_u+\delta&0\\
-\xi_a&0&0&\gamma_q +\delta
\end{pmatrix}.
\end{align*}
Here, the matrix $\mathcal{A}$ is always an $M$-matrix.
Finally, the next-generation matrix is defined as $\mathcal{B}\mathcal{A}^{-1}$ to compute the pandemic reproduction number
 $R_0 = \rho(\mathcal{B}\mathcal{A}^{-1})$. For more details about this threshold number and the special matrix, one can refer \cite{NGM:2008}. It's important to note that these matrices are larger than the $4 \times 4$ matrix which we have seen so far, in most of the realistic model.\\

\noindent The model can be modified to understand the impact of social distancing and lockdown measures on the entire pandemic growth like the model considered in \cite{IMSC:2020} for predicting the spread of COVID-19 in India. In this model, a social contact matrix is considered and is partitioned into the home, workplace, school and all other contacts. Our notation is $C$ for the entire contact matrix partitioned by workplace ($C^W$), home ($C^H$), school ($C^S$) and others ($C^O$). Thus, $C = C^W + C^H + C^S + C^O$, where the total contact can be reduced by controlling all parts except home contact. The lockdown and social distancing like interventions can be incorporated by multiplying a time-dependent control function with the respective contact. The time-dependent social contact matrix at a time is
\begin{align}
C_{ij}(t) = C^H_{ij} + u^W (t)C^W_{ij} + u^S (t)C^S_{ij} + u^O (t) C^O_{ij}
\end{align}
where $u^W (t)$, $u^S(t)$ and $u^O(t)$ are the control functions corresponding to contact matrices for work, school and others, depending on the percentage of lockdown implemented on their contacts.\\

\noindent Further, we consider the age structure of the population, and divide the population aggregated by age into $M$ groups labeled by $i = 1, 2, \cdots M$.
The population within the age group $i$ is partitioned into susceptible $S_i$, asymptomatic infectives $I^a_i$, reported symptomatic $I^s_i$, unreported symptomatic $I^u_i$ and removed individuals $R_i$. The sum
of these is the size of the population in age group $i$,
$N_i = S_i + I^a_i + I^s_i + I^u_i + R_i$. Therefore, the total population size is
$$\displaystyle{\sum_{i = 1}^M} N_i = N.$$
The contact matrix based on a demographic survey is suggested in \cite{contact:age}  that considers $16(=M)$  different age groups ranging from 1 to 80 age people. So, we have the contact matrix of order 16  with  $n$ number of disease transformation variables.
Then, the incidence function associated with  the depletion from susceptible class due to infected individuals is
$$\lambda = \beta \bigg( \alpha_a \frac{I^a}{N} + \alpha_i \frac{I^s}{N} + \alpha_u \frac{I^u}{N} \bigg).$$
This is modified by incorporating the contact matrix and age structure as follows:
$$\lambda_i(t) = \beta(t) \sum_{j=1}^M \bigg(C_{i\,j}^a(t)\dfrac{I^a_j}{N_j} +  C_{i\,j}^s(t)\dfrac{I^s_j}{N_j} +  C_{i\,j}^u(t) \dfrac{I^u_j}{N_j}\bigg),\; $$

\noindent where $C_{i\,j}^a(t)$, $C_{i\,j}^s(t)$ and $C_{i\,j}^u(t)$ are the fraction of the total contact matrix $C_{i\,j}(t)$ corresponding to the faction parameters $f_a$, $f_s$ and $f_u$, respectively. To find the reproduction number, we linearised  the dynamical system (\ref{model}) and evaluate the corresponding next generation matrix at the disease free fixed point $\Big(\dfrac{\mu}{\delta}, 0, 0, 0, 0\Big)$. Incorporating the $M$ age group and their social contacts, we have the required matrices
\begin{align}\label{Bkron}
    \mathcal{B} = \begin{pmatrix}
\beta\alpha_a & \beta\alpha_i &\beta \alpha_u & 0\\
0&0&0&0\\
0&0&0&0\\
0&0&0&0
\end{pmatrix}\otimes K
\end{align}
and
\begin{align}\label{Akron}
\mathcal{A} = \begin{pmatrix}
\xi_a+\gamma_a+\eta_a+\delta & 0 &0 & -\phi\\
-\theta \gamma_a&\eta_i+\delta&0&-(1-\rho_s)\gamma_q\\
-(1-\theta) \gamma_a&0&\eta_u+\delta&0\\
-\xi_a&0&0&\gamma_q +\delta
\end{pmatrix} \otimes \textbf{I}_M,
\end{align}
where $\otimes$ is the kronecker product and $\displaystyle{K_{i\,j} = \frac{C_{i\,j} N_{i}}{N_j}, \,(1 \leq i, \,j \leq M)}.$ The matrices $\mathcal{A}$ and $\mathcal{B}$ are now of order $64$, but this can be even bigger than  10,000 for larger data sets. For simplicity, we assume the social contact only in the same age group so that  $K$ reduces to the identity matrix. The matrices $\mathcal{A}$ and $\mathcal{B}$ are block diagonal matrices, and each block diagonal can be different if the model parameters vary with respect to age groups.

\subsection{Numerical Algorithm $\&$ Computations}
Motivated by the wide range of applications of the two-stage type iterative algorithm including the fast algorithm for the PageRank problem \cite{miga1}, more general Markov chain \cite{miga} and the Influence Maximization problems in social networks \cite{He}, we provide below the two-stage algorithm that we use for our computations.

\begin{algorithm}
  \caption{Two-stage Iteration}\label{alg1}
  \begin{algorithmic}[1]
      \Procedure{two-stage}{$\epsilon, s_k, A, b$}\Comment{$\epsilon$ = Tolerance and $s_k$
      = No. of inner iteration}
    \State \texttt{Generate a regular splitting } $A = U-V$
      \State \texttt{Initial guess} $x_0$
      \While{$\| x_n - x_{n+1}\| < \epsilon $}\Comment{Convergence condition}
        \State $y_0 = y = (y^{(1)}, y^{(2)}, \cdots, y^{(q)})\gets x_n$
        \For{\texttt{$i$ = 1 to $q$}}
         \State \texttt{Generate a weak regular splitting } $U = F-G$
            \For{\texttt{$j$ = 1 to $s_{k}-1$}}
            \State $F y_{j+1}^{(i)} \gets G y_j^{(i)} + \left( V y_j^{(i)} + b \right)^{i}$
            \EndFor
            \State $ x_{n+1}^{(i)} \gets y_{s_{k}}^{(i)}$
        \EndFor
        \State $x_{n+1} \gets (x_{n+1}^{(1)}, x_{n+1}^{(2)}, \cdots, x_{n+1}^{(q)})$
      \EndWhile\label{euclidendwhile}
        \EndProcedure
  \end{algorithmic}
\end{algorithm}

The model parameters are mostly estimated based on the data available from the COVID-19 spread during the first few days in India. Let us consider a particular set of data experimented in \cite{SAIUQR:2020}, the initial population sizes are $$(S, A, Q, I, U, R) =  (39402, 1500, 2000, 20, 0, 0)$$ for a particular state in India. The model parameters are $\mu =  1200$,  $\beta =  1.10$, $\alpha_a =  0.264$, $\alpha_i = 0.76$, $\alpha_u =  0.96$, $\xi_a =  0.07151$, $\gamma_a = 0.0012$, $\gamma_q = 0.0015$, $\delta = 0.03$, $\eta_a = 1/7.48$, $\theta = 0.8$, $\eta_i = 1/7$, $\eta_u = 1/7$,  $\rho_s = 0.5$, and $\phi = 0.1$, as per the prescribed data in \cite{SAIUQR:2020}. The prescribed data provides us the new infection matrix and disease transition matrix as follows:
\begin{align}\label{Bnum}
    \mathcal{B} &=\begin{bmatrix}
    0.2904 & 0.836 & 1.056 & 0\\
    0 & 0 & 0 & 0\\
    0 & 0 & 0 & 0\\
    0 & 0 & 0 & 0
\end{bmatrix}\\\label{Anum}\nonumber
\noindent \mbox{and} \\
\mathcal{A}&=\begin{bmatrix}
0.23639984 & 0 & 0 & -0.07\\
-0.00096 & 0.17285714 & 0 & -0.00075\\
-0.00024 & 0 & 0.17285714 & 0\\
-0.07151 & 0 & 0 & 0.0315
\end{bmatrix}.
\end{align}

\noindent The matrix $\mathcal{A}$ is an M-matrix and its inverse is computed using Matlab command $\mathcal{A} \backslash \mathcal{I}$. Here,
$$\mathcal{A}^{-1}=\begin{bmatrix}
1.06564745\times {10}^{+02} & 0 & 0 & 3.38300777\times {10}^{+02}\\
1.64147870 & 5.78512397 & 0 & 5.34878455\\
1.47957662\times {10}^{-01} & 0 & 5.78512397 & 4.69706864\times {10}^{-01}\\
2.41918885\times {10}^{+02} & 0 & 0 & 7.99742493\times {10}^{+02}
\end{bmatrix},$$
and the corresponding Next Generation Matrix is 
$$\mbox{\bf NGM}= \mathcal{B}\mathcal{A}^{-1}=\begin{bmatrix}
2.84091508\times {10}^{+01} & 2.25355931\times {10}^{-03} & 0 & 9.01878340\times {10}^{+01}\\
0 & 0 & 0 & 0\\
0 & 0 & 0 & 0\\
0 & 0 & 0 & 0
\end{bmatrix}.$$
Finally, we have the basic reproduction number $R_0 = \rho(\mathcal{B}\mathcal{A}^{-1}) = 3.9327471467109305$. As we have $\rho(\mathcal{B}\mathcal{A}^{-1}) = \rho(\mathcal{A}^{-1}\mathcal{B})$, so instead of computing $\mathcal{B}\mathcal{A}^{-1}$, we can compute $\mathcal{A}^{-1}\mathcal{B}$ to meet our purpose. Our aim is to compute the solution matrix $\mathcal{A}^{-1}\mathcal{B}$ for solving the matrix equation 
\begin{align}\label{multirhs}
    \mathcal{A}\mathcal{X} = \mathcal{B}
\end{align}
using two stage iterative method as discussed in Section \ref{MR}.

\begin{figure}[H]
	\centering
	~~(a)~~~~~~~~~~~~~~~~~~~~~~~~~~~~~~~~~~~~~~~~~~~~~~~~~~~~~~~~~~~(b)~~~\\[1ex]
	\includegraphics[height=2.8in,width=3.1in]{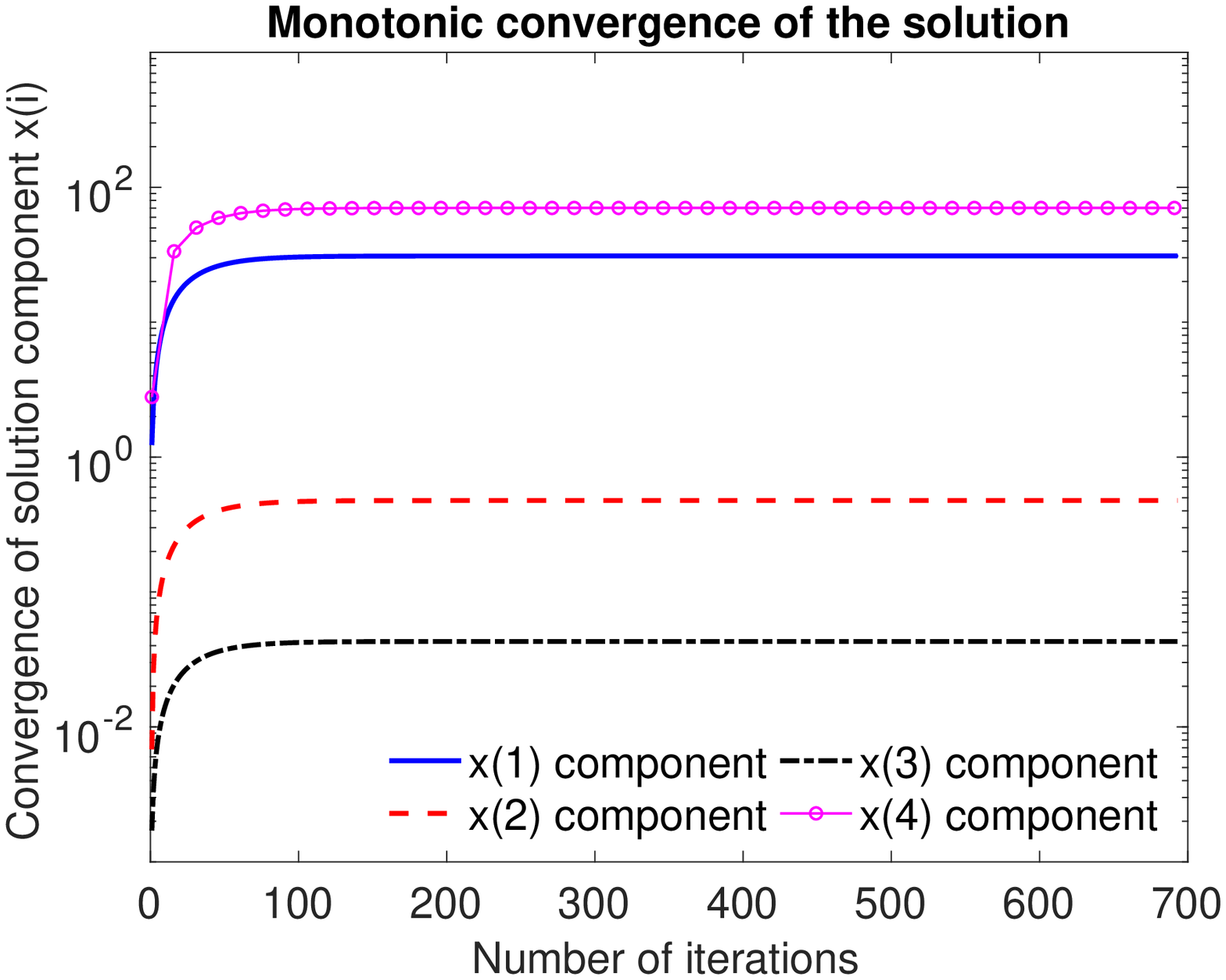}
	\includegraphics[height=2.8in,width=3.1in]{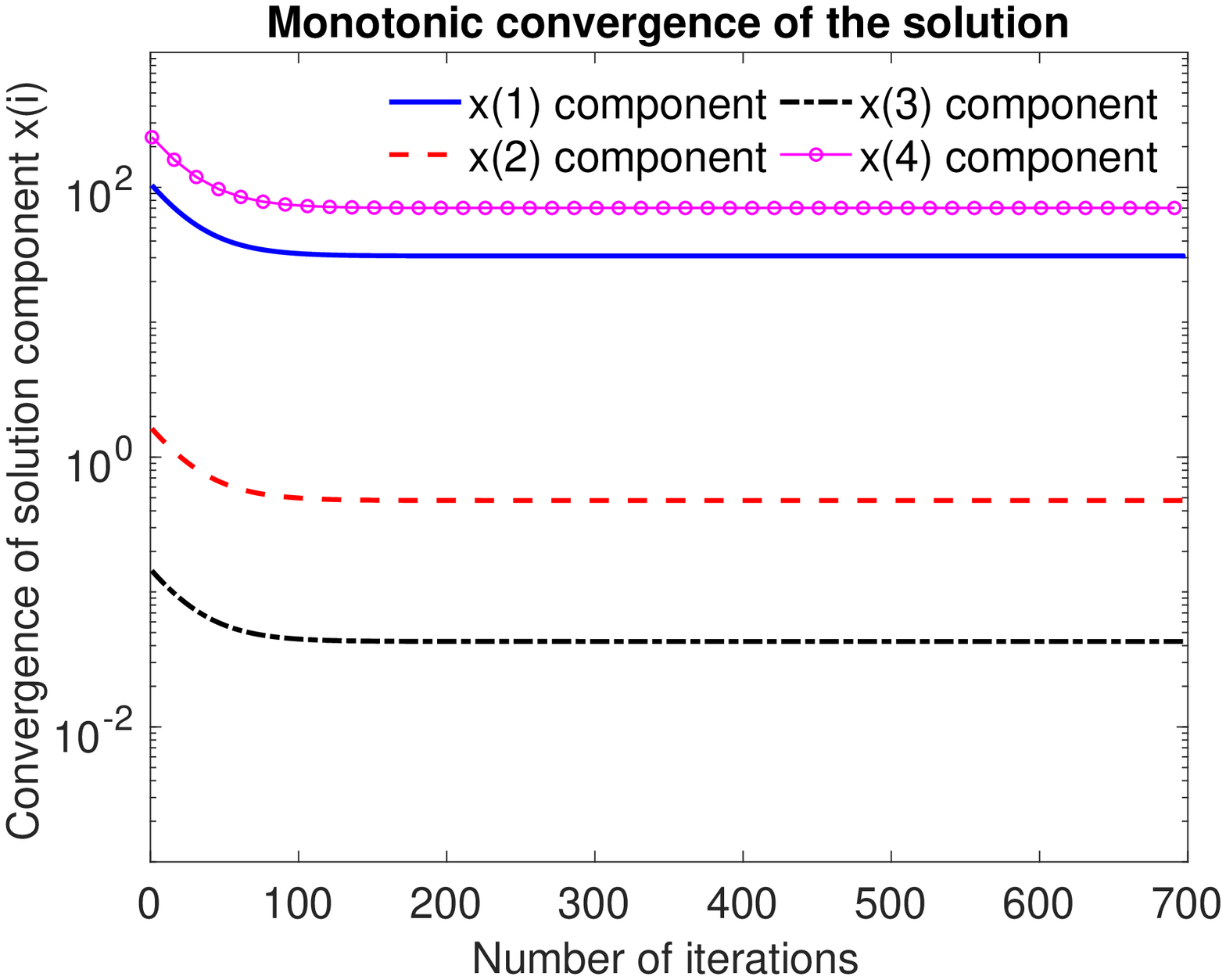}
    \caption{Monotonically increasing and decreasing convergence pattern of the iterative solution to the exact solution from two different initial approximated vector, which are nonnegative.}
    \label{fig:fig2}
\end{figure}
\noindent The monotonic convergence theorem proved in Section \ref{monotone} is computationally established by solving the $4 \times 4$ linear system (\ref{multirhs}) with multiple Right-Hand Side(RHS). The matrix and both the splittings satisfy all the required conditions mentioned in the theorem. Also, the initial approximations $x_0= {[0, 0, 0, 0]}^T$ and $y_0 ={[106.5647, 10, 1, 241.9189]}^T $ satisfy the necessary conditions required by  Theorem \ref{thm4.1}. Only the first column of the RHS matrix is used for the two-stage iteration method to generate Fig.\ref{fig:fig2} corresponding to the iteration numbers. One can observe here, each component of the solution vector converges monotonically. In (a), the convergence is monotonically increasing.  In (b),  it is monotonically decreasing. And one can observe  from the above figure that both are converging to single solution vector ${{\mathcal{A}}^{-1}}\mathcal{B}(:,1)$.\\

\noindent Next, our interest is to understand the computational aspect of the two-stage iterative method using the type-II splittings. Our matrix computations considered the $4 \times 4$ transition matrix (\ref{Anum}) of the pandemic model with standard iteration scheme \eqref{eqn1.2} and two-stage iteration scheme \eqref{tsiter}, and similarly an extended $64 \times 64$ matrix using the block matrix formulation (\ref{Akron}) of (\ref{Anum}). In two-stage Algorithm-\ref{alg1}, we have used SOR type splitting with a relaxation parameter $\omega$. In Table-\ref{table:one}, we have compared the standard iteration scheme with the two-stage standard iteration scheme corresponding to $\omega = 1$ and $\omega = 1.7$. The data listed in table shows that the two-stage iteration scheme for $\omega = 1.7$ is faster than the standard iteration scheme and the two-stage  iteration scheme with $\omega=1$. \\

\noindent When the condition numbers of the matrices become larger,  the two-stage iteration scheme with  $\omega = 1.7$ converges gradually faster than the two-stage iteration scheme for $\omega = 1$. The condition number is higher when the rate at which the recovered individuals are reinfected (or $\phi$ value) in the model is bigger, so we have considered the value of $\phi$ as 0.07, 0.08, 0.09 and 0.10, such that the condition number increase gradually and the iteration numbers also increase. In Table-\ref{table:one}, we have computed condition number only for $4 \times 4$ matrices as there is no significant change in condition number for $64 \times 64$ size matrices when $\phi $ values are same.  Similarly,   we have computed the spectral radius only for $64 \times 64$ size.

\begin{table}[h]
\begin{center}
\caption{Comparison between standard iteration scheme and two-stage iteration scheme}\label{table:one}
\begin{tabular}{ |p{3.6cm}||p{2.2cm}|p{3.5cm}|p{1.5cm}|c|   }
\hline
{\bf $\phi$~-~value} & 	{\bf One stage} & {\bf Two-stage($\omega$=1)} & \multicolumn{2}{|c|}{\bf Two-stage($\omega$=1.7)} \\
\hline\hline
{\bf Matrix size $4 \times 4$ } &\multicolumn{3}{|c|}{\bf No. of iterations } & \bf~~~~~$\kappa(\mathcal{A})~~~~~$ \\
\hline\hline
0.07 	&136 	&68 	&71 	& 27.36\\
\hline\hline
0.08 	&207 	&104 	&83 	& 39.60\\
\hline\hline
0.09 	&380	&190 	&108 	& 69.33\\
\hline\hline
0.10 	&1428 	&714 	&149 	&$2.43\times{10}^{+02}$\\
\hline\hline
{\bf Matrix size $64 \times 64$ } &\multicolumn{3}{|c|}{\bf No. of iterations }& $\rho(T)$ \\
\hline\hline
0.07 	&142 	&71 	&72 	&0.686\\
\hline\hline
0.08 	&218 	&109 	&89	    &0.733\\
\hline\hline
0.09 	&400 	&200 	&116 	&0.778\\
\hline\hline
0.10 	&1496 	&748 	&154 	&0.820\\
\hline
\end{tabular}
\end{center}
\end{table}


\section{Acknowledgements}

The second(NM) and last(DM) authors acknowledge the support provided
by Science and Engineering Research Board, Department of Science and Technology, New Delhi, India, under the grant numbers MTR/2019/001366 and 
MTR/2017/000174, respectively. We would also like to thank the {\bf Government of India} for introducing the {\it work from home initiative} during the COVID-19 pandemic.

\end{document}